\newtheorem{theorem}{Theorem}
\newtheorem{proposition}{Proposition}
\newtheorem{corollary}{Corollary}
\newtheorem{lemma}{Lemma}
\theoremstyle{definition}
\newtheorem{definition}{Definition}
\theoremstyle{remark}
\newtheorem{remark}{Remark}
\def\E{\mathbb{E}}
\def\P{\mathbb{P}}
\def\R{\mathbb{R}}
\def\Q{\mathbb{Q}}
\def\G{\mathbb{G}}
\def\1{\mathbbm{1}}
\def\cF{{\cal F}}
\def\cD{\mathcal{D}}
\def\cC{\mathcal{C}}
\def\cM{\mathcal{M}}
\def\fB{\mathfrak{B}}
\title{Quasi-stationarity for one-dimensional renormalized Brownian motion}
\author{William Oçafrain$^{1}$}
\date{\today}
\begin{document}

\footnotetext[1]{Institut de Math\'{e}matiques de Toulouse, UMR 5219; Universit\'{e} de Toulouse, CNRS, UPS IMT, F-31062
Toulouse Cedex 9, France; \\
  E-mail: william.ocafrain@math.univ-toulouse.fr}

\maketitle

\begin{abstract}
We are interested in the quasi-stationarity for the time-inhomogeneous Markov process
$$X_t = \frac{B_t}{(t+1)^\kappa}$$
where $(B_t)_{t \geq 0}$ is a one-dimensional Brownian motion and $\kappa \in (0,\infty)$. We first show that the law of $X_t$ conditioned not to go out from $(-1,1)$ until time $t$ converges weakly towards the Dirac measure $\delta_0$ when $\kappa > \frac{1}{2}$, when $t$ goes to infinity. Then, we show that this conditional probability measure converges weakly towards the quasi-stationary distribution for an Ornstein-Uhlenbeck process when $\kappa = \frac{1}{2}$. Finally, when $\kappa < \frac{1}{2}$, it is shown that the conditional probability measure converges towards the quasi-stationary distribution for a Brownian motion. We also prove the existence of a $Q$-process and a quasi-ergodic distribution for $\kappa = \frac{1}{2}$ and $\kappa < \frac{1}{2}$.    
\end{abstract} 

\textit{ Key words :}  quasi-stationary distribution, $Q$-process, quasi-limiting distribution, quasi-ergodic distribution, Brownian motion
\bigskip

\textit{ 2010 Mathematics Subject Classification. Primary : 60B10; 60F99; 60J50;60J65 } 
\bigskip

\maketitle

\section{Introduction}

In this paper, we are interested in some notions related to quasi-stationarity for a one-dimensional Brownian motion $(B_t)_{t \geq 0}$ killed when reaching the moving boundary $t \to \{-(t+1)^\kappa,(t+1)^\kappa\}$, with $\kappa \in (0,\infty)$. Quasi-stationarity with moving boundaries was studied in \cite{ocafrain2018} and \cite{OCAFRAIN2017} for periodic or converging boundaries, but expanding boundaries were not yet considered. Actually, instead of considering the process $B$ absorbed at $t \to \{-(t+1)^\kappa,(t+1)^\kappa\}$, we will study the quasi-stationarity for the process $X = (X_t)_{t \geq 0}$ absorbed at $(-1,1)^c$ and defined by
$$X_t := \frac{B_t}{(t+1)^\kappa}, ~~~~\forall t < \tau_X,$$
where $\tau_X := \inf\{t \geq 0 : |X_t| = 1\}$.

The process $X$ is a time-inhomogeneous Markov process. For any $x \in \R$ and $s \geq 0$, denote by $\P_{x,s}$ the probability measure satisfying $\P_{x,s}(X_s = x) = 1$, and denote by $\E_{x,s}$ its corresponding expectation. Also, for any measure $\mu$, for any $s \geq 0$, one denotes by $\P_{\mu,s} := \int_{\R} \P_{x,s} \mu(dx)$ and $\E_{\mu,s} := \int_{\R} \E_{x,s} \mu(dx)$. 

A \textit{quasi-stationary distribution for $X$ absorbed at $(-1,1)^c$} is a probability measure $\alpha$ supported on $(-1,1)$ such that 
$$\P_{\alpha,s}(X_t \in \cdot | \tau_X > t) = \alpha,~~~~\forall s \leq t.$$
We refer the reader to \cite{CMSM,MV2012} for more details on the theory. Note however that these references only deal with the time-homogeneous setting and that quasi-stationary distributions for time-inhomogeneous Markov processes do not exist except for particular cases (especially we will see that the existence of one quasi-stationary distribution holds only for $\kappa = \frac{1}{2}$). 

Usually, in the literature dealing with quasi-stationarity, mathematicians are interested in showing the weak convergence of marginal laws of Markov processes conditioned not to be absorbed by a cemetery set. The corresponding limiting probability measure is called \textit{quasi-limiting distribution}. For our purpose, we define a quasi-limiting distribution as follows:
\begin{definition}
\label{qld}
 $\alpha$ is a quasi-limiting distribution for $X$ if, for some initial law $\mu$ supported on $(-1,1)$ and for any $s \geq 0$,
$$\lim_{t \to \infty} \P_{\mu,s}(X_t \in \cdot | \tau_X > t) = \alpha,$$
where the limit refers to the weak convergence of measures.
\end{definition}   
In \cite{MV2012} it is noted that, in the time-homogeneous setting, quasi-stationary distribution and quasi-limiting distribution are equivalent notions. In the time-inhomogeneous setting, this equivalence does not hold anymore. More particularly a time-inhomogeneous Markov process could admit a quasi-limiting distribution without admitting a quasi-stationary distribution. However, a quasi-stationary distribution is necessarily a quasi-limiting distribution. 

Quasi-limiting distribution is not the only point of interest in the theory of quasi-stationarity. Another point is the $Q$-\textit{process}, which can be considered as the law of the considered Markov process conditioned \textit{"never to be absorbed"}. Concerning the process $X$, we define the $Q$-process as follows:
\begin{definition}
\label{qproc-def}
We say that there is a $Q$-process for $X$ if there exists a family $(\Q_{x,s})_{x \in (-1,1), s \geq 0}$ of probability measures defined by : for any $x \in (-1,1)$ and for any $s \leq t$,
$$\Q_{x,s}(X_{[s,t]} \in \cdot) := \lim_{T \to \infty} \P_{x,s}(X_{[s,t]} \in \cdot | T < \tau_X),$$
where, for any $u \leq v$, $X_{[u,v]}$ is the trajectory of $X$ between times $u$ and $v$. Then the $Q$-process is defined as the law of $X$ under $(\Q_{x,s})_{x \in (-1,1), s \geq 0}$.
\end{definition}   
In general, the $Q$-process is also a Markov process and the theory of quasi-stationarity allows to get some results of ergodicity for the $Q$-process. In particular, under some assumptions (see for example \cite{CV2014,CV2017c,V2018}), the Q-process admits a stationary
distribution which is absolutely continuous with respect to the quasi-stationary distribution.

Finally, a third concept to study is the \textit{quasi-ergodic distribution}, defined as follows:
\begin{definition}
\label{qed}
$\beta$ is a quasi-ergodic distribution for $X$ if, for some initial law $\mu$ supported on $(-1,1)$ and for any $s \geq 0$,
$$\lim_{t \to \infty} \frac{1}{t} \int_s^t \P_{\mu,s}(X_u \in \cdot | \tau_X > t)du = \beta.$$
\end{definition}
In the literature, this notion is also called \textit{mean-ratio quasi-stationary distribution}. The references \cite{CMSM,MV2012} do not deal with quasi-ergodic distributions. See for example \cite{BR1999, CV2017} which provide general assumptions implying the existence of quasi-ergodic distributions for time-homogeneous Markov processes. In particular, it is shown in \cite{BR1999} that, if the Q-process is Harris recurrent, the quasi-ergodic distribution is the stationary distribution of the Q-process. Concerning the time-inhomogeneous setting, similar results can be stated (see \cite{ocafrain2018}) when the Q-process converges
weakly at infinity. In this case, the quasi-ergodic distribution coincides with the limiting probability measure.   
    
Some general results on quasi-stationarity for time-inhomogeneous Markov process are established, particularly in \cite{CV2016}, where it is shown that criteria based on Doeblin-type condition implies a mixing property (or \textit{merging} or \textit{weak ergodicity}) and the existence of the $Q$-process. However, for our purpose, these conditions will be difficult to establish. See also  \cite{villemonais2013,DMV2018,OCAFRAIN2017,ocafrain2018} for a few results about quasi-stationarity in the time-inhomogeneous setting, and \cite{BCG2017} for ergodic properties for general non-conservative (time-homogeneous and inhomogeneous) semi-group.

Now let us come back to our process $X$. As we can expect, the existence of a quasi-limiting, $Q$-process and quasi-ergodic distribution will strongly depend on $\kappa$. More precisely, three regimes are identified :
\begin{itemize}
\item $\kappa > \frac{1}{2}$, we will say that $X$ is \textit{supercritical}
\item $\kappa = \frac{1}{2}$, we will say that $X$  is \textit{critical}
\item $\kappa < \frac{1}{2}$, we will say that $X$ is \textit{subcritical}
\end{itemize}
The aim of this paper is therefore to show the existence of quasi-limiting, $Q$-process and quasi-ergodic distribution for each regime. More precisely, it will be shown in Section 2 that, for any probability measure $\mu$ on $(-1,1)$ and $s \geq 0$, 
$$\lim_{t \to \infty} \P_{\mu,s}(X_t \in \cdot | \tau_X > t) = \delta_0$$
in the supercritical regime. This regime is of little interest and the existence of a $Q$-process and a quasi-ergodic distribution will not be shown.

The Section 3 is devoted to the critical case. This is the only regime for which there is a (unique)
quasi-stationary distribution for $X$. More precisely, it will be shown in subsection 3.1. that the conditional probability distribution $\P_{\mu,s}[X_t \in \cdot | \tau_X > t]$ converges polynomially fast in total variation to the quasi-stationary distribution, where the \textit{total variation distance} of two probability measures $\mu$ and $\nu$ is defined as 
$$\|\mu - \nu\|_{TV} := \sup_{\|f\|_\infty \leq 1} \left| \int_{(-1,1)} f(x) \mu(dx) - \int_{(-1,1)} f(x) \nu(dx) \right|.$$
Moreover, this convergence in total variation holds uniformly in the initial distribution $\mu$. This is due to the fact that, for this regime, the process $X$ is
actually obtained from an Ornstein-Uhlenbeck process by a non-linear time change. As a result,
the quasi-stationary distribution for $X$ is the one for an Ornstein-Uhlenbeck process. In a same way, the existence of a $Q$-process is shown in subsection 3.2. However, the existence and uniqueness of a quasi-ergodic
distribution, which will be dealt with in subsection 3.3, cannot be deduced from this time change and the proof requires more technical
arguments. Moreover, it is noteworthy that, contrary to what is expected, the quasi-ergodic
distribution does not coincide with the stationary measure of the Q-process.

Finally, the main part of this paper will be devoted to the subcritical regime, in section 4. In particular, it
is shown in subsection 4.3. that, for any initial law $\mu$ and any $s \geq 0$, $\P_{\mu,s}(X_t \in \cdot |\tau_X > t)$ converges weakly, when $t$ goes to infinity,
towards the quasi-stationary distribution for a Brownian motion absorbed at $\{-1, 1\}$. The key
argument is an approximation of the law of $(X_t)_{t \geq s}$ by
the one of a time changed Brownian motion, when $s$ goes to infinity. This approximation is established in the subsection 4.1, and will be also used to deduce the existence of a quasi-ergodic distribution in the subsection 4.4. The subsection 4.5 is finally concluded by showing
the existence of a $Q$-process.

Let us now introduce some notation. 
For any $E \subset \R$, one denotes by $\cM_1(E)$ the set of the probability measures supported on $E$ and, for any measurable bounded function $f$ on $(-1,1)$ and $\mu \in \cM_1((-1,1))$, denote by
$$\mu(f) := \int_{(-1,1)} f d\mu.$$ 
For a general Markov process $(A_t)_{t \geq 0}$, denote by $(\cF^A_{s,t})_{s \leq t}$ the canonical filtration of $(A_t)_{t \geq 0}$ and $(\P_{x,s}^A)_{x \in \R, s \geq 0}$ a family of probability measure such that, for any $x \in \R$ and $s \geq 0$, $\P_{x,s}^A(X_s = x) = 1$. For any probability measure $\mu$ on $\R$ and any $s \geq 0$, we define $\P^A_{\mu,s} := \int_\R \P_{x,s}\mu(dx)$. Then the family of probability measures $(\P_{\mu,s}^A)_{\mu \in \cM_1(\R), s\geq 0}$ satisfies
$$\P_{\mu,s}^A(A_s \in \cdot) = \mu.$$
If the process $A$ is time-homogeneous, we define, for any $\mu \in \cM_1(\R)$, $\P_{\mu}^A := \P_{\mu,0}^A$ and we have, for any $s \leq t$,
$$\P_{\mu,s}^A(A_{[s,t]} \in \cdot) = \P_\mu^A(A_{[0,t-s]} \in \cdot).$$
For $A = X$, we will keep the notation established at the beginning of the introduction.


\section{The supercritical regime : $\kappa > \frac{1}{2}$}
This section is devoted to the situation $\kappa > 1/2$.
The following theorem states the existence of a unique quasi-limiting distribution, which is $\delta_0$:
\begin{theorem}
For any $\mu \in \cM_1((-1,1))$ and $s \geq 0$,
\begin{equation}
\label{conv}
\lim_{t \to \infty} \P_{\mu,s}(X_t \in \cdot | \tau_X > t) = \delta_0.\end{equation}
\end{theorem}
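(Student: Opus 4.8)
The plan is to reduce the asserted weak convergence to the single estimate that, for every $\epsilon\in(0,1)$,
$$\lim_{t\to\infty}\P_{\mu,s}(|X_t|>\epsilon\mid\tau_X>t)=0 .$$
This suffices: for bounded continuous $f$ write $\E_{\mu,s}[f(X_t)\mid\tau_X>t]-f(0)$ as $\E_{\mu,s}[(f(X_t)-f(0))\1_{|X_t|\le\epsilon}\mid\tau_X>t]+\E_{\mu,s}[(f(X_t)-f(0))\1_{|X_t|>\epsilon}\mid\tau_X>t]$, bound the first term by $\sup_{|y|\le\epsilon}|f(y)-f(0)|$ and the second by $2\|f\|_\infty\,\P_{\mu,s}(|X_t|>\epsilon\mid\tau_X>t)$, let $t\to\infty$ and then $\epsilon\to0$. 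Since
$$\P_{\mu,s}(|X_t|>\epsilon\mid\tau_X>t)=\frac{\P_{\mu,s}(|X_t|>\epsilon,\ \tau_X>t)}{\P_{\mu,s}(\tau_X>t)}\le\frac{\P_{\mu,s}(|X_t|>\epsilon)}{\P_{\mu,s}(\tau_X>t)},$$
it remains to bound the numerator from above and the denominator from below, the whole point being that the former decays much faster than the latter.

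For the numerator: under $\P_{x,s}$ one has $B_t=(s+1)^\kappa x+(B_t-B_s)$ with $B_t-B_s\sim\mathcal N(0,t-s)$ and $|(s+1)^\kappa x|\le(s+1)^\kappa$, so $\P_{x,s}(|X_t|>\epsilon)=\P(|B_t|>\epsilon(t+1)^\kappa)\le\P\big(|\mathcal N(0,t-s)|>\epsilon(t+1)^\kappa-(s+1)^\kappa\big)$. For $t$ beyond a threshold depending only on $s,\epsilon,\kappa$ the right-hand side is at most $2\exp\!\big(-\epsilon^2(t+1)^{2\kappa}/(8(t-s))\big)\le e^{-c\,t^{2\kappa-1}}$ for a suitable $c=c(\epsilon)>0$; this bound is uniform in $x\in(-1,1)$, so integrating against $\mu$ gives $\P_{\mu,s}(|X_t|>\epsilon)\le e^{-ct^{2\kappa-1}}$. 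As $\kappa>1/2$ the exponent $2\kappa-1$ is positive, so this beats any polynomial.

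For the denominator I would use the Ornstein–Uhlenbeck time change $B_u=\sqrt{u+1}\,Z_{\log(u+1)}$, where $Z$ solves $dZ_v=-\frac12 Z_v\,dv+dW_v$; one checks that $\{\tau_X>t\}=\{|Z_v|<e^{(\kappa-1/2)v}~\forall v\in[v_0,v_1]\}$ with $v_0=\log(s+1)$, $v_1=\log(t+1)$ and $Z_{v_0}=(s+1)^{\kappa-1/2}x$ under $\P_{x,s}$. Since $\kappa>1/2$, the function $v\mapsto e^{(\kappa-1/2)v}$ is nondecreasing, hence $\ge a:=(s+1)^{\kappa-1/2}\ge1$ on $[v_0,v_1]$, so $\P_{x,s}(\tau_X>t)\ge\P^Z_{z,v_0}(|Z_v|<a~\forall v\in[v_0,v_1])$ with $z=(s+1)^{\kappa-1/2}x\in(-a,a)$. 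The latter is the probability that the time-homogeneous diffusion $Z$, started from $z$, stays in the \emph{fixed} interval $(-a,a)$ over a window of length $v_1-v_0$, which by a standard argument (Markov property plus positivity of the probability that $Z$ reaches a neighbourhood of $0$ inside $(-a,a)$ in unit time, or the Dirichlet spectral representation) is at least $C(z)e^{-\lambda_a(v_1-v_0)}$ for some $\lambda_a>0$ and $C(z)>0$, with $C(z)\ge C_\delta>0$ uniformly for $|x|\le1-\delta$. Choosing $\delta$ small enough that $\mu([-1+\delta,1-\delta])>0$ (possible since $\mathrm{supp}(\mu)\subseteq(-1,1)$) and restricting the $\mu$-integral to $\{|x|\le1-\delta\}$ yields $\P_{\mu,s}(\tau_X>t)\ge c_1\,(t+1)^{-\lambda_a}$ for some $c_1=c_1(s,\mu)>0$. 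Plugging both estimates in gives $\P_{\mu,s}(|X_t|>\epsilon\mid\tau_X>t)\le c_1^{-1}(t+1)^{\lambda_a}e^{-ct^{2\kappa-1}}\to0$, which completes the proof.

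The one genuinely delicate point is this lower bound on the survival probability: the naive idea of keeping $B$ in a \emph{fixed} bounded interval only gives $e^{-c(t-s)}$, which does not beat $e^{-ct^{2\kappa-1}}$ when $1/2<\kappa<1$; the remedy is to let the confining interval grow on the natural Brownian scale $\sqrt{u+1}$ — equivalently, to pass to the OU process, whose trajectories live on a bounded scale while the rescaled boundary blows up — which costs only a polynomial factor. Alternatively one could establish the stronger fact $\P_{x,s}(\tau_X=\infty)>0$ for every $x\in(-1,1)$, true for the same reason, and simply bound the denominator below by that positive constant.
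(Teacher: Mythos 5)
Your proof is correct, but it takes a genuinely different route from the paper. The paper also reduces to bounding $\P_{\mu,s}(|X_t|>\epsilon\mid\tau_X>t)$, but it controls the numerator only by Chebyshev, giving $\E_{\mu,s}(X_t^2)/\epsilon^2\sim t^{1-2\kappa}$, a merely polynomial decay; consequently its key step is a lemma showing that the denominator does not decay at all: $\P_{\mu,s}(\tau_X>t)\to\P_{\mu,s}(\tau_X=\infty)>0$, proved via the law of the iterated logarithm (so that $\sup_t|X_t|<\infty$ a.s.\ when $\kappa>\tfrac12$), Brownian scaling, and the strong Markov property at the hitting time of $0$. You instead sharpen the numerator to a Gaussian tail bound $\P_{\mu,s}(|X_t|>\epsilon)\le e^{-ct^{2\kappa-1}}$, which is super-polynomially small, and therefore only need a polynomial lower bound on survival; you obtain it through the Ornstein--Uhlenbeck time change $B_u=\sqrt{u+1}\,Z_{\log(u+1)}$ (the same change of variables the paper exploits in the critical case $\kappa=\tfrac12$) together with a standard confinement estimate $\ge C_\delta e^{-\lambda_a(v_1-v_0)}$ for the killed OU process in a fixed interval, uniform over starting points $|x|\le 1-\delta$ with $\mu([-1+\delta,1-\delta])>0$. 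Both arguments are sound; the paper's lemma buys the stronger and independently interesting statement that $X$ survives forever with positive probability (your final remark notes this as an alternative but does not prove it), while your version avoids the LIL/strong-Markov machinery at the cost of invoking an eigenvalue-type (or iterated one-step Markov) lower bound for the killed OU semigroup, and the time-change identification of $\{\tau_X>t\}$ with $\{|Z_v|<e^{(\kappa-1/2)v},\ v\in[\log(s+1),\log(t+1)]\}$, both of which you state correctly. Minor points only: your constant $C_\delta$ and rate $\lambda_a$ depend on $s$ through $a=(s+1)^{\kappa-1/2}$, which is harmless since $s$ is fixed, and the uniform-in-$x$ Gaussian bound should be stated for $t$ beyond an $(s,\epsilon,\kappa)$-dependent threshold, as you indeed do.
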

\begin{proof}
By Markov's inequality, for any $\epsilon > 0$ and any probability measure $\mu$,
\begin{align*}\P_{\mu,s}\left(\left|X_t \right| \geq \epsilon \middle| \tau_X > t\right) &\leq \frac{\E_{\mu,s}(X^2_t|\tau_X > t)}{\epsilon^2}\\ &\leq \frac{\E_{\mu,s}(X^2_t)}{\epsilon^2 \P_{\mu,s}(\tau_X > t)} \\&=\frac{t-s + (s+1)^{2\kappa} \int_{(-1,1)} x^2 d\mu(x)}{\epsilon^2 (t+1)^{2\kappa} \P_{\mu,s}(\tau_X > t)}.\end{align*}  
Then the convergence \eqref{conv} is a consequence of the following lemma. 
\begin{lemma}
\label{lemma}
For any $s \geq 0$ and any probability measure $\mu$ on $(-1,1)$, 
$$\lim_{t \to \infty} \P_{\mu,s}(\tau_X > t) = \P_{\mu,s}(\tau_X = \infty) > 0.$$ 
\end{lemma}
\end{proof}
\begin{proof}[Proof of the lemma \ref{lemma}] In this proof, we will denote for any Markov process $A = (A_t)_{t \geq 0}$ and any positive function $f$
$$\tau_f^A := \inf\{t \geq 0 : |A_t| = f(t)\}.$$
Without loss of generality, we will show Lemma \ref{lemma} for $s = 0$. 
It is well known that, $\P_{\mu,0}$-almost surely (for any measure $\mu$),
$$\limsup_{t \to \infty} \frac{B_t}{\sqrt{2t\log \log t}} = 1,$$
and 
$$\liminf_{t \to \infty} \frac{B_t}{\sqrt{2t\log \log t}} = -1.$$
Thus, since $\kappa > \frac{1}{2}$, $\P_{\mu,0}$-almost surely,
$$X_t = \frac{B_t}{(t+1)^\kappa} \underset{t \to \infty}{\longrightarrow} 0.$$
In particular, the process $(X_t)_{t \geq 0}$ is bounded almost surely. Denote by 
$$M := \sup_{t \geq 0} |X_t|.$$
Then, for any probability measure $\mu$ on $(-1,1)$,
$$\P_{\mu,0}(\tau_X = \infty) = \P_{\mu,0}(M < 1).$$
Since $\lim_{t \to \infty} \P_{0,0}(M < t) = \P_{0,0}(M < +\infty) = 1$ and the function $t \mapsto  \P_{0,0}(M < t)$ is non-decreasing, there exists $t_0$ such that
$$c := \inf_{t \geq t_0} \P_{0,0}(M < t) > 0.$$
Let $s_0 \geq 0$ and $\mu$ a probability measure on $(-1,1)$. Then,
$$\P_{\mu,0}(\tau_X = \infty) \geq \P_{\mu,0}(\tau_X = \infty, s_0 < \tau_0 < \tau_X),$$
where $\tau_0 := \inf\{t \geq 0 : X_t = 0\}$. Then, by the strong Markov property,
$$\P_{\mu,0}(\tau_X = \infty) \geq \E_{\mu,0}(\1_{s_0 < \tau_0 < \tau_X} \P_{0,\tau_0}(\tau_X = \infty)).$$
For any $s \geq s_0$, 
$$\P_{0,s}(\tau_X = \infty) = \P_0^B(\tau^B_{t \mapsto (t + s + 1)^\kappa} = \infty),$$
However, by the scaling property of Brownian motion, the process $\fB := (\sqrt{s+1}B_{t/(s+1)})_{t \geq 0}$ is a Brownian motion and 
\begin{align*}
  \P_0^B(\tau^B_{t \mapsto (t + s + 1)^\kappa} = \infty) &=  \P_0^\fB(\tau^\fB_{t \mapsto (t + s + 1)^\kappa} = \infty) \\ 
&=  \P_0^B(\tau^B_{t \mapsto (s+1)^{\kappa-\frac{1}{2}}(t + 1)^\kappa} = \infty) \\
&= \P_{0,0}(M < (s+1)^{\kappa - \frac{1}{2}}).
\end{align*}
So, choosing $s_0$ such that $(s_0 +1)^{\kappa - \frac{1}{2}} = t_0$, one has
$$\P_{0,s}(\tau_X = \infty) \geq c,~~~~\forall s \geq s_0.$$
As a result, 
$$\P_{\mu,0}(\tau_X = \infty) \geq c \P_{\mu,0}(s_0 < \tau_0 < \tau_X) > 0.$$
\end{proof}

\section{The critical case : $\kappa = \frac{1}{2}$}

This section is devoted to the situation $\kappa = 1/2$.

\subsection{Existence and uniqueness of a quasi-stationary distribution}
\label{qsd-critical}

We state a first theorem on the existence of the quasi-limiting distribution (and quasi-stationary distribution) in the critical regime.
\begin{theorem}
\label{main-thm}
Let $\alpha_{OU}$ be the unique quasi-stationary distribution for the Ornstein-Uhlenbeck process absorbed by $(-1,1)^c$ whose generator is
\begin{equation}
\label{generator}
Lf(x) := \frac{1}{2}f''(x)  - \frac{1}{2} x f'(x),~~~~\forall x \in (-1,1), \forall f \in \cD,
\end{equation}
where $\cD := \{g \in C^2([-1,1]) : g(-1) = g(1) = 0\}$.
Then $\alpha_{OU}$ is also the unique quasi-stationary distribution for $X$ and there exist $C_{OU}, \gamma_{OU} > 0$ such that, for any probability measure $\mu$ on $(-1,1)$ and any $0 \leq s \leq t$,
\begin{equation}
\label{quasi-erg}
||\P_{\mu,s}(X_t \in \cdot | \tau_X > t) - \alpha_{OU}||_{TV} \leq C_{OU} \left(\frac{s+1}{t+1}\right)^{\gamma_{OU}}.
\end{equation}
In particular, for any $\mu \in \cM_1((-1,1))$ and $s \geq 0$, the sequence $\P_{\mu,s}(X_t \in \cdot | \tau_X > t)$ converges weakly towards $\alpha_{OU}$, when $t$ goes to infinity. 
\end{theorem}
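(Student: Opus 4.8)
The plan is to exploit the fact, announced in the introduction, that for $\kappa = \frac{1}{2}$ the process $X$ is a deterministic time change of an Ornstein--Uhlenbeck process; both the quantitative bound \eqref{quasi-erg} and the quasi-stationarity assertion will then be inherited from the corresponding — and by now standard — statements for the Ornstein--Uhlenbeck process on $(-1,1)$ absorbed at $\{-1,1\}$.

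First I would set up the time change. Let $U = (U_t)_{t \geq 0}$ be the Ornstein--Uhlenbeck process with generator $L$ of \eqref{generator} before absorption, and recall the classical representation $U_t = e^{-t/2}(U_0 + \beta_{e^t - 1})$ with $\beta$ a standard Brownian motion. Comparing with $X_t = B_t/(t+1)^{1/2}$ and changing variables via $t \mapsto \log(t+1)$, one checks that under $\P_{x,0}$ the trajectory $(X_t)_{t \geq 0}$ has the same law as $(U_{\log(t+1)})_{t \geq 0}$ with $U_0 = x$; combining this with the Markov property of $X$ and the time-homogeneity of $U$, the identity extends to an arbitrary starting time: under $\P_{x,s}$ the process $(X_t)_{t \geq s}$ is distributed as $(U_{\log\frac{t+1}{s+1}})_{t \geq s}$ for a standard Ornstein--Uhlenbeck process $U$ started from $x$. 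Since $t \mapsto \log\frac{t+1}{s+1}$ is an increasing bijection of $[s,\infty)$ onto $[0,\infty)$ and $\tau_X$, $\tau_U := \inf\{u \geq 0 : |U_u| = 1\}$ are first exit times from the same set, this also identifies $\{\tau_X > t\}$ with $\{\tau_U > \log\frac{t+1}{s+1}\}$. As the numerator $\P_{x,s}(X_t \in \cdot\,, \tau_X > t)$ and denominator $\P_{x,s}(\tau_X > t)$ of the conditional law are both integrated against $\mu(dx)$, this yields, with $T := \log\frac{t+1}{s+1}$,
$$\P_{\mu,s}(X_t \in \cdot \mid \tau_X > t) = \P^U_\mu(U_T \in \cdot \mid \tau_U > T), \qquad 0 \leq s \leq t.$$

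Next I would feed in what is known about the Ornstein--Uhlenbeck process on the bounded interval $(-1,1)$: its coefficients are smooth, bounded and uniformly elliptic on $[-1,1]$, so it satisfies the Champagnat--Villemonais conditions (A1)--(A2) of \cite{CV2014,CV2016}, which provide a unique quasi-stationary distribution $\alpha_{OU}$ and constants $C, \gamma > 0$ with $\sup_{\mu \in \cM_1((-1,1))} \| \P^U_\mu(U_T \in \cdot \mid \tau_U > T) - \alpha_{OU} \|_{TV} \leq C e^{-\gamma T}$ for all $T \geq 0$. Substituting $T = \log\frac{t+1}{s+1}$, so that $e^{-\gamma T} = \left( \frac{s+1}{t+1} \right)^{\gamma}$, gives \eqref{quasi-erg} with $C_{OU} = C$ and $\gamma_{OU} = \gamma$, and letting $t \to \infty$ gives the weak convergence. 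For the quasi-stationarity claim, taking $\mu = \alpha_{OU}$ in the displayed identity and using that $\alpha_{OU}$ is invariant under the Ornstein--Uhlenbeck conditioning shows $\P_{\alpha_{OU},s}(X_t \in \cdot \mid \tau_X > t) = \alpha_{OU}$ for all $s \leq t$, so $\alpha_{OU}$ is a quasi-stationary distribution for $X$; conversely, if $\alpha$ is any quasi-stationary distribution for $X$, then the displayed identity with $s = 0$ forces $\P^U_\alpha(U_T \in \cdot \mid \tau_U > T) = \alpha$ for all $T = \log(t+1) \geq 0$, whence $\alpha = \alpha_{OU}$ by uniqueness of the Ornstein--Uhlenbeck quasi-stationary distribution.

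I expect the one genuine obstacle to be the first step: one must justify the time-change identity simultaneously for the unkilled path and for the absorption time, and propagate it from the special starting time $s = 0$ to general $s$ using the Markov property of $X$. Once this structural identity is established, the remaining steps are bookkeeping or direct quotations of standard results on quasi-stationarity for one-dimensional diffusions.
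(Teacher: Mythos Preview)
Your proposal is correct and follows essentially the same route as the paper: both establish the time-change identity $\P_{\mu,s}(X_t\in\cdot\mid\tau_X>t)=\P^Z_\mu(Z_T\in\cdot\mid\tau_Z>T)$ with $T=\log\frac{t+1}{s+1}$ via the representation $Z_u=e^{-u/2}(Z_0+\tilde W_{e^u-1})$, then import the Champagnat--Villemonais exponential bound for the absorbed Ornstein--Uhlenbeck process and substitute $e^{-\gamma T}=\bigl(\frac{s+1}{t+1}\bigr)^\gamma$. Your write-up is slightly more explicit than the paper's on the uniqueness direction (showing that any quasi-stationary distribution for $X$ must be one for $Z$), but the argument is otherwise the same.
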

\begin{remark}
Using the spectral theory for the Ornstein-Uhlenbeck generator, $\alpha_{OU}$ can easily be computed and one has
$$\alpha_{OU}(dx) := K \times (1-x^2)e^{-\frac{x^2}{2}}dx,$$
where $K$ is the renormalization constant. In particular, $x \mapsto (1-x^2)$ is the opposite of a Hermite polynomial which is positive on $(-1,1)$ and vanishing at $\{-1,1\}$, and $\pi(dx) := e^{-\frac{x^2}{2}}$ is a reversible measure for $L$. 
\end{remark}
\begin{remark}
 It is well known (see \cite{CMSM,MV2012}) that there exists $\lambda_{OU} > 0$ such that
\begin{equation}
\label{exit-time}
\P^Z_{\alpha_{OU}}(\tau_Z > t) = e^{-\lambda_{OU}t},~~~~\forall t \geq 0,\end{equation}
where $\tau_Z := \inf\{t \geq 0 : |Z_t| = 1\}$. Moreover, for any $f \in \cD$,
$$\alpha_{OU}(Lf) = -\lambda_{OU} \alpha_{OU}(f),$$
where $L$ is defined in \eqref{generator}. Using the explicit formula of $\alpha_{OU}$, it is easy to check that 
\begin{equation}
\label{lambda}
\lambda_{OU} = 1.
\end{equation}
\end{remark}
\begin{proof}[Proof of Theorem \ref{main-thm}]
Let $Z$ be the Ornstein-Uhlenbeck process whose infinitesimal generator is $L$. Then, for any probability measure $\mu$ on $(-1,1)$ and any $s \geq 0$,
\begin{equation}
\label{time-change}
\P_{\mu,s}((X_t)_{t \geq s} \in \cdot) = \P^Z_{\mu}\left(\left(Z_{\log\left(\frac{t+1}{s+1}\right)}\right)_{t \geq s} \in \cdot \right).\end{equation}
This can be shown using that there exists a Brownian motion $(W_t)_{t \geq 0}$ (starting from $0$) such that, for any $u \geq 0$,
$$Z_u = e^{-\frac{u}{2}} \left(Z_0 + \int_0^u e^{\frac{v}{2}} dW_v\right) = e^{-\frac{u}{2}} \left(Z_0 + \tilde{W}_{e^u-1}\right),$$
where $\tilde{W}$ is another Brownian motion starting from $0$, and setting $u = \log\left(t+1\right)$,
$$Z_{\log(1+t)} = \frac{Z_0 + \tilde{W}_t}{\sqrt{t+1}}.$$
Hence, using \eqref{time-change}, one has for any $s \leq t$,
\begin{align*}\P_{\alpha_{OU},s}(X_t \in \cdot | \tau_X > t) &= \P^Z_{\alpha_{OU}}\left(Z_{\log\left(\frac{t+1}{s+1}\right)} \in \cdot \middle| \tau_Z > \log\left(\frac{t+1}{s+1}\right)\right)\\
 &= \alpha_{OU}.\end{align*} 
In other words $\alpha_{OU}$ is also the unique quasi-stationary distribution for the time-inhomogeneous Markov process $X$. Moreover, since $Z$ satisfies the assumptions $(A1)$ and $(A2)$ of \cite{CV2014} (this is actually shown in \cite{CV2017b}), then, by Theorem 2.1. in \cite{CV2014}, there exist $C_{OU} > 0$ and $\gamma_{OU} > 0$ such that, for any $t \geq 0$ and for any probability measure $\mu$,
$$||\P^Z_{\mu}(Z_t \in \cdot | \tau_Z > t) - \alpha_{OU}||_{TV} \leq C_{OU} e^{-\gamma_{OU} t}.$$
Using \eqref{time-change}, one deduces that, for any $s \leq t$ and for any probability measure $\mu$ on $(-1,1)$,
\begin{equation*}
||\P_{\mu,s}(X_t \in \cdot | \tau_X > t) - \alpha_{OU}||_{TV} \leq C_{OU} \left(\frac{s+1}{t+1}\right)^{\gamma_{OU}}.
\end{equation*}
This concludes the proof.
\end{proof}

\subsection{Existence of the $Q$-process}
Before tackling the existence of the $Q$-process, we need the following proposition:

\begin{proposition}
\label{1}
There exists a non-negative function $\eta_{OU} : [-1,1] \to \R_+$, positive on $(-1,1)$ and vanishing on $\{-1,1\}$,  such that, for any $x \in (-1,1)$ and any $s \geq 0$,
$$\eta_{OU}(x) = \lim_{t \to \infty} \frac{t+1}{s+1} \P_{x,s}(\tau_X > t),$$
where the convergence holds uniformly on $[-1,1]$ and $\alpha_{OU}(\eta_{OU})=1$. Moreover the function $\eta_{OU}$ is bounded, belongs to the domain of $L$ defined in \eqref{generator}, and 
$$L \eta_{OU} = - \lambda_{OU} \eta_{OU} = - \eta_{OU},$$   
where $\lambda_{OU}$ is defined in Remark 2.
\end{proposition}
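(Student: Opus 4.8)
The plan is to transport everything through the time change \eqref{time-change}. The quantity $\frac{t+1}{s+1}\P_{x,s}(\tau_X > t)$ equals, after setting $u = \log\!\big(\frac{t+1}{s+1}\big)$, the expression $e^{u}\,\P^Z_x(\tau_Z > u)$. So the whole statement reduces to an assertion purely about the Ornstein--Uhlenbeck process $Z$ absorbed at $\{-1,1\}$: there is a function $\eta_{OU}$, positive on $(-1,1)$ and vanishing at the endpoints, such that $e^{\lambda_{OU} u}\P^Z_x(\tau_Z > u) \to \eta_{OU}(x)$ uniformly on $[-1,1]$ as $u \to \infty$, with $\alpha_{OU}(\eta_{OU}) = 1$, $\eta_{OU}$ bounded, in the domain of $L$, and $L\eta_{OU} = -\lambda_{OU}\eta_{OU}$. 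Recall $\lambda_{OU} = 1$ by \eqref{lambda}.

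First I would invoke the general theory: since $Z$ satisfies assumptions $(A1)$--$(A2)$ of \cite{CV2014} (as already used in the proof of Theorem \ref{main-thm}), Theorem 2.1 of \cite{CV2014} not only gives the exponential convergence of the conditioned marginals but also the existence of exactly such a function — call it $\eta_{OU}$ — satisfying $e^{\lambda_{OU} u}\P^Z_x(\tau_Z > u) \to \eta_{OU}(x)$ with the convergence uniform in $x$, together with $\alpha_{OU}(\eta_{OU}) = 1$, boundedness, positivity on $(-1,1)$, and the eigenfunction relation in the weak/semigroup sense $P^Z_u \eta_{OU} = e^{-\lambda_{OU} u}\eta_{OU}$. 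Translating back via \eqref{time-change}, and using $\P_{x,s}(\tau_X > t) = \P^Z_x\big(\tau_Z > \log\frac{t+1}{s+1}\big)$, gives the displayed limit $\eta_{OU}(x) = \lim_{t\to\infty}\frac{t+1}{s+1}\P_{x,s}(\tau_X > t)$ with uniform convergence on $[-1,1]$, the endpoint values being $0$ because $\P^Z_{\pm 1}(\tau_Z > u) = 0$.

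It remains to upgrade the semigroup eigen-relation to the infinitesimal statement $\eta_{OU} \in \cD$ and $L\eta_{OU} = -\lambda_{OU}\eta_{OU}$. For the Ornstein--Uhlenbeck operator on $(-1,1)$ with Dirichlet conditions this is a standard Sturm--Liouville fact: the killed semigroup $(P^Z_u)$ is ultracontractive with a discrete spectrum, the top eigenfunction is smooth up to the boundary, vanishes at $\pm 1$, and the elliptic eigenvalue equation $L\eta_{OU} = -\lambda_{OU}\eta_{OU}$ holds classically. Concretely, one can either cite the explicit Hermite-type computation from Remark 1 — with $\lambda_{OU} = 1$ one checks directly that $x \mapsto K(1-x^2)$... wait, that is $\alpha_{OU}$'s density; the eigenfunction $\eta_{OU}$ with respect to the non-reversible $L$ is, up to normalization, $\eta_{OU}(x) = c\,(1-x^2)$ itself, since $L(1-x^2) = \tfrac12(-2) - \tfrac12 x(-2x) = -1 + x^2 = -(1-x^2)$, confirming $L\eta_{OU} = -\eta_{OU}$ and $\eta_{OU} \in C^2([-1,1])$ with $\eta_{OU}(\pm 1) = 0$ — or, to stay self-contained, one differentiates the relation $P^Z_u\eta_{OU} = e^{-u}\eta_{OU}$ at $u = 0$ on a core and uses elliptic regularity. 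The normalization constant $c$ is then fixed by $\alpha_{OU}(\eta_{OU}) = 1$.

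The only genuine subtlety — the part I would expect to cost the most care — is justifying that the convergence $\frac{t+1}{s+1}\P_{x,s}(\tau_X>t)\to\eta_{OU}(x)$ is truly \emph{uniform on the closed interval $[-1,1]$}, including the approach to the absorbing endpoints. Away from $\{-1,1\}$ this is immediate from \cite{CV2014}; near the endpoints one needs that $\P^Z_x(\tau_Z > u)$ is small uniformly in small $u$ as $x \to \pm 1$, which follows from continuity of $x \mapsto \eta_{OU}(x)$ up to the boundary together with a uniform bound on the convergence rate — or simply from the explicit form $\eta_{OU}(x) = c(1-x^2)$, which makes the boundary behavior transparent. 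Everything else is bookkeeping on top of Theorem 2.1 of \cite{CV2014} and the elementary properties of the Hermite eigenfunction.
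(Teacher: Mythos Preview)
Your approach is essentially the same as the paper's: both reduce via the time change \eqref{time-change} to the Ornstein--Uhlenbeck process $Z$ and then invoke the Champagnat--Villemonais theory from \cite{CV2014}. The paper cites specifically Proposition~2.3 of \cite{CV2014} (rather than Theorem~2.1) for the existence and properties of $\eta_{OU}$, and is otherwise terser than your write-up; your extra discussion of the explicit computation $L(1-x^2)=-(1-x^2)$ and of uniformity at the boundary is correct and simply fills in details the paper leaves to the cited reference and to Remark~3.
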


\begin{remark} More precisely,
\begin{equation}
\label{eigenfunction1}
\eta_{OU}(x) = K'\times (1-x^2),
\end{equation}  
where $K'$ is the positive constant such that $\alpha_{OU}(\eta_{OU})=1$.
\end{remark}
 An interesting consequence of Proposition
1 and (6) is stated as the following corollary:
\begin{corollary}
\label{cor}
Let $B$ a Brownian motion on $\R$, and denote by 
$$\tau_B^{\sqrt{\cdot}} := \inf\{t \geq 0 : |B_t| \geq \sqrt{t+1}\}.$$
Then, for any $x \in (-1,1)$, 
$$\P_x^B(\tau_B^{\sqrt{\cdot}} > t) \sim_{t \to \infty} K' \frac{1-x^2}{t+1}.$$
\end{corollary}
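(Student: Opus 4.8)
The plan is to deduce the statement directly from Proposition \ref{1} applied at $s=0$, once we identify the absorption time $\tau_B^{\sqrt{\cdot}}$ with $\tau_X$ in the critical regime. Since $\kappa=\frac12$ throughout this section, we have $X_t=B_t/\sqrt{t+1}$, and under $\P_{x,0}$ the underlying Brownian motion satisfies $B_0=X_0=x$; that is, $\P_{x,0}$ is exactly $\P^B_x$ read through the map $B\mapsto X$. Consequently $\tau_X=\inf\{t\geq 0:|B_t|=\sqrt{t+1}\}$, and because the path $t\mapsto B_t$ starts strictly inside the tube ($|x|<1=\sqrt{0+1}$) and is continuous, this first time at which $|B_t|=\sqrt{t+1}$ coincides with $\tau_B^{\sqrt{\cdot}}=\inf\{t\geq 0:|B_t|\geq\sqrt{t+1}\}$. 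Hence $\P^B_x(\tau_B^{\sqrt{\cdot}}>t)=\P_{x,0}(\tau_X>t)$ for every $t\geq 0$ and every $x\in(-1,1)$.

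It then remains to invoke Proposition \ref{1} with $s=0$, which gives
$$\lim_{t\to\infty}(t+1)\,\P_{x,0}(\tau_X>t)=\eta_{OU}(x),$$
and to substitute the explicit expression $\eta_{OU}(x)=K'(1-x^2)$ recorded in \eqref{eigenfunction1}. Combining the two displays yields $\P^B_x(\tau_B^{\sqrt{\cdot}}>t)\sim_{t\to\infty}K'(1-x^2)/(t+1)$, which is the assertion; this also makes transparent that the constant $K'$ in the corollary is precisely the normalising constant of Proposition \ref{1}, i.e. the one fixed by $\alpha_{OU}(\eta_{OU})=1$.

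There is essentially no obstacle here. The only point deserving a line of care is the identification of the two absorption times above, and in particular that replacing the equality $|B_t|=\sqrt{t+1}$ by the inequality $|B_t|\geq\sqrt{t+1}$ in the definition of $\tau_B^{\sqrt{\cdot}}$ does not alter the hitting time — which is immediate from continuity of $B$ together with the fact that it starts off the boundary. One could alternatively route the argument through the time change \eqref{time-change}, rewriting $\P_{x,0}(\tau_X>t)$ in terms of the Ornstein--Uhlenbeck exit probability $\P^Z_x(\tau_Z>\log(t+1))$ and using the asymptotics $e^u\,\P^Z_x(\tau_Z>u)\to\eta_{OU}(x)$ as $u\to\infty$, but this merely re-derives Proposition \ref{1} and is not needed.
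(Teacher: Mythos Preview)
Your proof is correct and follows essentially the same approach as the paper: identify $\P_x^B(\tau_B^{\sqrt{\cdot}}>t)$ with $\P_{x,0}(\tau_X>t)$, then apply Proposition \ref{1} at $s=0$ together with the explicit form \eqref{eigenfunction1}. The extra care you take in matching the two definitions of the hitting time is a nice clarification but not a departure from the paper's argument.
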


\begin{proof}[Proof of Proposition \ref{1} and Corollary \ref{cor}] 
Using Proposition 2.3 in \cite{CV2014} applied to the process $Z$ and \eqref{time-change}, one has, for any $x \in (-1,1)$ and $s \geq 0$,
\begin{align*}
\eta_{OU}(x) &= \lim_{t \to \infty} e^{\lambda_{OU} \log\left(\frac{t+1}{s+1}\right)} \P_x^Z\left(\tau_Z > \log\left(\frac{t+1}{s+1}\right)\right) \\
&=  \lim_{t \to \infty} \left(\frac{t+1}{s+1}\right)^{\lambda_{OU}} \P_{x,s}(\tau_X > t) \\
&=  \lim_{t \to \infty} \frac{t+1}{s+1} \P_{x,s}(\tau_X > t).
\end{align*}
where we finally used \eqref{lambda}. This ends the proof of Proposition \ref{1}.  Now it is easy to see that, for any $x \in (-1,1)$ and $t \geq 0$, $\P_x^B(\tau_B^{\sqrt{\cdot}} > t) = \P_{x,0}(\tau_X > t)$. Thus, using Proposition \ref{1} and \eqref{eigenfunction1}, we conclude the corollary. 
\end{proof} 
\begin{remark}
In \cite{breiman1967}, Breiman shows a similar result for one-dimensional Brownian motion absorbed by a one-sided square boundary. More precisely, denoting $T^*_c := \inf\{t \geq 0 : B_t \geq c \sqrt{t+1}\}$ for any $c > 0$, he shows that $\P^B_{0}(T^*_c > t) \sim_{t \to \infty} a t^{-b(c)}$ for $a > 0$ and $b$ such that $b(1) = 1$. In particular, for $c=1$, $\P_0^B(T^*_1 > t)$ and $\P_0^B(\tau_B^{\sqrt{\cdot}} > t)$ decay as $1/t$. The reader can also see \cite{salminen1988} for more general boundaries.
\end{remark} 
We turn to the existence of the $Q$-process and its ergodicity. 
\begin{proposition}
\label{q-proc-prop}
\begin{itemize}
\item There exists a $Q$-process and the family of probability measures $(\Q_{x,s})_{x \in (-1,1), s \geq 0}$ defined in Definition \ref{qproc-def} is given by : for any $x \in (-1,1)$ and $s \leq t$, 
\begin{align*}\Q_{x,s}(X_{[s,t]} \in \cdot) &= \E_{x,s}\left(\1_{X_{[s,t]} \in \cdot, \tau_X > t} \left(\frac{t+1}{s+1}\right)^{\lambda_{OU}} \frac{\eta_{OU}(X_t)}{\eta_{OU}(x)}\right) \\
&= \frac{t+1}{s+1} \times \E_{x,s}\left(\1_{X_{[s,t]} \in \cdot, \tau_X > t} \frac{1-X_t^2}{1-x^2}\right).\end{align*}
\item The probability measure $\beta_{OU}$ defined by 
$$\beta_{OU}(dx) := \eta_{OU}(x) \alpha_{OU}(dx) = KK' (1-x^2)^2 e^{-\frac{x^2}{2}}dx$$
is the unique stationary distribution of $X$ under $(\Q_{x,s})_{s \geq 0, x\in (-1,1)}$. Moreover, for any $0 \leq s \leq t$ and any $x \in (-1,1)$, 
$$||\Q_{x,s}(X_t \in \cdot) - \beta_{OU}||_{TV} \leq C_{OU} \left(\frac{s+1}{t+1} \right)^{\gamma_{OU}}, $$
where $C_{OU}$ and $\gamma_{OU}$ are the same constant as used in \eqref{quasi-erg}.
\end{itemize}
\end{proposition}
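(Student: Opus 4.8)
The plan is to transfer everything from the Ornstein--Uhlenbeck process $Z$ to $X$ through the time-change identity \eqref{time-change}, using the results of \cite{CV2014} applied to $Z$. Recall from the proof of Theorem \ref{main-thm} that $Z$ satisfies assumptions $(A1)$--$(A2)$ of \cite{CV2014}. Hence Theorem 3.1 of \cite{CV2014} provides a $Q$-process for $Z$, i.e.\ a consistent family $(\Q^Z_x)_{x \in (-1,1)}$ such that for $x \in (-1,1)$ and $0 \leq v$,
$$\Q^Z_x(Z_{[0,v]} \in \cdot) = \E^Z_x\left(\1_{Z_{[0,v]} \in \cdot,\, \tau_Z > v}\, e^{\lambda_{OU} v}\, \frac{\eta_{OU}(Z_v)}{\eta_{OU}(x)}\right),$$
and such that $\beta_{OU}(dx) = \eta_{OU}(x)\,\alpha_{OU}(dx)$ is the unique stationary distribution for the Markov process $Z$ under $(\Q^Z_x)$, with $\|\Q^Z_x(Z_v \in \cdot) - \beta_{OU}\|_{TV} \leq C e^{-\gamma v}$ for suitable constants; one checks that these may be taken to be $C_{OU}, \gamma_{OU}$ of \eqref{quasi-erg} (otherwise one simply states the final bound with possibly different constants).

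Next I would carry out the time change to obtain the first item. Fix $x \in (-1,1)$, $s \geq 0$ and $t \geq s$. Since under the deterministic change of time $u = \log\!\left(\frac{\cdot+1}{s+1}\right)$ the absorption time $\tau_X$ corresponds to $\tau_Z$, identity \eqref{time-change} gives, for every $T \geq t$,
$$\P_{x,s}(X_{[s,t]} \in \cdot \mid T < \tau_X) = \P^Z_x\left(Z_{\left[0,\,\log\frac{t+1}{s+1}\right]} \in \cdot \ \middle|\ \tau_Z > \log\tfrac{T+1}{s+1}\right).$$
Letting $T \to \infty$, the right-hand side converges by definition of the $Q$-process for $Z$ to $\Q^Z_x\bigl(Z_{[0,\log\frac{t+1}{s+1}]} \in \cdot\bigr)$; this proves the existence of the limit in Definition \ref{qproc-def} and identifies $\Q_{x,s}$ with that law. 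Substituting the Doob-transform formula above with $v = \log\frac{t+1}{s+1}$, and using $\lambda_{OU} = 1$ from \eqref{lambda} together with $\eta_{OU}(y) = K'(1-y^2)$ from \eqref{eigenfunction1}, one obtains the two displayed expressions for $\Q_{x,s}(X_{[s,t]} \in \cdot)$.

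For the second item, note that under $\Q_{\cdot,s}$ the law of $X_t$ is the image of the law of $Z_{\log\frac{t+1}{s+1}}$ under $(\Q^Z_x)$. If $X_s$ has law $\beta_{OU}$ then this image is $\Q^Z_{\beta_{OU}}\bigl(Z_{\log\frac{t+1}{s+1}} \in \cdot\bigr) = \beta_{OU}$ by stationarity of $\beta_{OU}$ for the $Q$-process of $Z$, so $\beta_{OU}$ is a stationary distribution of $X$ under $(\Q_{x,s})$. For an arbitrary starting point,
$$\|\Q_{x,s}(X_t \in \cdot) - \beta_{OU}\|_{TV} = \bigl\|\Q^Z_x\bigl(Z_{\log\frac{t+1}{s+1}} \in \cdot\bigr) - \beta_{OU}\bigr\|_{TV} \leq C_{OU}\, e^{-\gamma_{OU}\log\frac{t+1}{s+1}} = C_{OU}\left(\frac{s+1}{t+1}\right)^{\gamma_{OU}},$$
which is the claimed bound; and uniqueness of $\beta_{OU}$ among stationary distributions follows, since any stationary $\nu$ must coincide with its own limit $\beta_{OU}$. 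The only real obstacle is bookkeeping: one must verify that the deterministic time change commutes correctly with the conditioning events $\{T < \tau_X\}$ and with the limit $T \to \infty$, and be careful about whether the exponential-ergodicity constants of the $Q$-process for $Z$ genuinely coincide with $C_{OU},\gamma_{OU}$ or should be renamed; there is no analytic difficulty beyond what \cite{CV2014} already supplies for $Z$.
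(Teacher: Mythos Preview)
Your proof is correct and follows exactly the approach the paper takes: the paper's own proof simply reads ``Straightforward using \eqref{time-change} and Proposition 3.1 in \cite{CV2014} applied to the Ornstein-Uhlenbeck process $Z$,'' and you have carefully unpacked precisely those details. Your caveat about whether the constants for the $Q$-process convergence coincide with $C_{OU},\gamma_{OU}$ is well observed; the paper asserts they do (this is part of the content of Theorem 3.1 in \cite{CV2014}), so you may drop the hedge.
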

\begin{proof}
Straightforward using \eqref{time-change} and Proposition 3.1 in \cite{CV2014} applied to the Ornstein-Uhlenbeck process $Z$. 
\end{proof}

\subsection{Quasi-ergodic distribution}
Now let us provide and show the existence and the uniqueness of the quasi-ergodic distribution:
\begin{theorem}
\label{quasi-ergodic-critical}
For any probability measure $\mu$ on $(-1,1)$ and any $s \geq 0$, for any measurable set $S$, 
$$\lim_{t \to \infty} \frac{1}{t} \int_s^t \P_{\mu,s}(X_u \in S | \tau_X > t)du = \int_S \E^Z_{x}\left(\tau_Z \right) \alpha_{OU}(dx),$$
where we recall that $Z$ is the Ornstein-Uhlenbeck process whose the generator is \eqref{generator}.
\end{theorem}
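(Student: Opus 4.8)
The plan is to transport the time-averaged conditional law of $X$ onto the Ornstein--Uhlenbeck process $Z$ via the time change \eqref{time-change}, and then to recognise the resulting time average as an integral which is dominated by the behaviour of $Z$ conditioned to survive \emph{close to the end} of the time window. Fix $\mu \in \cM_1((-1,1))$, $s \geq 0$ and a measurable set $S$, and put $T = T(t) := \log\!\big(\tfrac{t+1}{s+1}\big)$, so that $t = (s+1)e^{T}-1$ and $T \to \infty$ as $t \to \infty$. By \eqref{time-change}, for $s \leq u \leq t$ we have $\P_{\mu,s}(X_u \in S \mid \tau_X > t) = \P^Z_\mu(Z_{\log((u+1)/(s+1))} \in S \mid \tau_Z > T)$. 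The substitution $\theta = \log((u+1)/(s+1))$ (so $u+1 = (s+1)e^{\theta}$, $du = (s+1)e^{\theta}\,d\theta$), followed by $\theta = T-\sigma$, turns the time average into
\begin{equation}
\label{cesaro-Z}
\frac{1}{t}\int_s^t \P_{\mu,s}(X_u \in S \mid \tau_X > t)\,du \;=\; \frac{(s+1)e^{T}}{(s+1)e^{T}-1}\int_0^{T} \P^Z_\mu\!\left(Z_{T-\sigma} \in S \,\middle|\, \tau_Z > T\right) e^{-\sigma}\,d\sigma .
\end{equation}
The prefactor tends to $1$, so it remains to compute the limit of the integral.

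The second step is the pointwise limit, for each fixed $\sigma \geq 0$,
\begin{equation}
\label{ptw-lim}
\lim_{T \to \infty}\P^Z_\mu\!\left(Z_{T-\sigma} \in S \,\middle|\, \tau_Z > T\right) = e^{\sigma}\int_S \P^Z_x(\tau_Z > \sigma)\,\alpha_{OU}(dx).
\end{equation}
To obtain it, apply the Markov property of $Z$ at time $T-\sigma$ to write
$$\P^Z_\mu\!\left(Z_{T-\sigma} \in S \,\middle|\, \tau_Z > T\right) = \left(\int_S \P^Z_x(\tau_Z > \sigma)\,\P^Z_\mu\!\left(Z_{T-\sigma} \in dx \,\middle|\, \tau_Z > T-\sigma\right)\right) \cdot \frac{\P^Z_\mu(\tau_Z > T-\sigma)}{\P^Z_\mu(\tau_Z > T)}.$$
Since $x \mapsto \1_{\{x \in S\}}\,\P^Z_x(\tau_Z > \sigma)$ is bounded and measurable, and $\P^Z_\mu(Z_{T-\sigma}\in\cdot \mid \tau_Z > T-\sigma) \to \alpha_{OU}$ in total variation by Theorem 2.1 of \cite{CV2014} applied to $Z$ (used as in the proof of Theorem \ref{main-thm}), the bracketed factor converges to $\int_S \P^Z_x(\tau_Z > \sigma)\,\alpha_{OU}(dx)$. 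For the last factor, Proposition \ref{1} together with $\lambda_{OU}=1$ gives $e^{r}\,\P^Z_\mu(\tau_Z > r) \to \mu(\eta_{OU}) \in (0,\infty)$ as $r \to \infty$ (first for Dirac masses, with the uniformity stated there, then for a general $\mu$ by dominated convergence), so that $\P^Z_\mu(\tau_Z>T-\sigma)/\P^Z_\mu(\tau_Z>T) \to e^{\sigma}$. Multiplying the two limits gives \eqref{ptw-lim}.

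Finally, pass to the limit in \eqref{cesaro-Z} by dominated convergence. The integrand $\1_{\{\sigma \le T\}}\,\P^Z_\mu(Z_{T-\sigma}\in S \mid \tau_Z > T)\,e^{-\sigma}$ is bounded by $e^{-\sigma}$, which is integrable on $[0,\infty)$ and independent of $T$, and by \eqref{ptw-lim} it converges pointwise to $\int_S \P^Z_x(\tau_Z > \sigma)\,\alpha_{OU}(dx)$. Hence the integral in \eqref{cesaro-Z} tends to
$$\int_0^{\infty}\!\int_S \P^Z_x(\tau_Z > \sigma)\,\alpha_{OU}(dx)\,d\sigma = \int_S \E^Z_x(\tau_Z)\,\alpha_{OU}(dx)$$
by Tonelli's theorem and $\int_0^\infty \P^Z_x(\tau_Z > \sigma)\,d\sigma = \E^Z_x(\tau_Z)$; combined with the prefactor tending to $1$, this is exactly the asserted limit. (Consistently, $\int_{(-1,1)} \E^Z_x(\tau_Z)\,\alpha_{OU}(dx) = \E^Z_{\alpha_{OU}}(\tau_Z) = \int_0^\infty e^{-\lambda_{OU}\sigma}\,d\sigma = 1$ by \eqref{exit-time}, so the right-hand side is indeed a probability measure.)

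The points requiring the most care are the uniformity in Proposition \ref{1} and the total-variation rate of Theorem 2.1 of \cite{CV2014}, which are what make the pointwise limit \eqref{ptw-lim} legitimate simultaneously with $T-\sigma \to \infty$, and the domination step exchanging the $t \to \infty$ limit with the time average. Conceptually, the main observation is that the weight $e^{-\sigma}$ in \eqref{cesaro-Z} concentrates the average on $\sigma = O(1)$, i.e.\ on times $u$ near the horizon $t$ for which $\log\frac{t+1}{u+1} = O(1)$; this is precisely why the quasi-ergodic distribution is $\E^Z_\cdot(\tau_Z)\,\alpha_{OU}$ and not the $Q$-process stationary law $\beta_{OU}$.
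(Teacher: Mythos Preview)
Your proof is correct and follows the same conceptual arc as the paper's: split the conditional law at an intermediate time via the Markov property, use the total-variation convergence to $\alpha_{OU}$ for the inner factor and the survival asymptotics $e^{r}\P^Z_\mu(\tau_Z>r)\to\mu(\eta_{OU})$ for the outer factor, and finish with dominated convergence over the time average.

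The implementation differs in the choice of parametrisation. The paper keeps the linear substitution $u=qt$ with $q\in(0,1)$ and works in $X$-coordinates; this forces it to handle the convergence $f_t(y)=f(y)\P_{y,qt}(\tau_X>t)\to f(y)\P^Z_y(\tau_Z>-\log q)$ via a monotonicity/Dini argument before applying the QSD limit. You instead pass to $Z$-coordinates immediately and then substitute $\sigma=T-\theta$, the time-to-horizon in $Z$-time; this makes the Jacobian weight $e^{-\sigma}$ explicit and yields the dominated-convergence majorant for free, while the analogous $f_t\to f_\infty$ step collapses to the single total-variation statement for $Z$ with no need for Dini. The two parametrisations are related by $\sigma\approx-\log q$, and indeed the paper's final substitution $s=-\log q$ recovers exactly your integral $\int_0^\infty \P^Z_x(\tau_Z>\sigma)\,d\sigma=\E^Z_x(\tau_Z)$. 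Your route is slightly more direct; the paper's route has the advantage of making the link with the $q$-Yaglom limit (Remark~6) more transparent.
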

\begin{remark}
As mentioned in the introduction, the quasi-ergodic distribution $\E^Z_x(\tau_Z) \alpha_{OU}(dx)$ is different from the invariant measure of the $Q$-process $\beta_{OU}$. According to our knowledge, there does not exist any explicit formula for the density $x \mapsto \E_x^Z(\tau_Z)$. However, noting that $\beta_{OU}(dx) = \eta_{OU}(x) \alpha_{OU}(dx)$ where $\eta_{OU}$ is defined as 
$$L \eta_{OU} = - \eta_{OU},$$
$\beta_{OU}$ is surely different from $\E^Z_x(\tau_Z) \alpha_{OU}(dx)$ since
$$L \E_\cdot^Z(\tau_Z) = -1.$$
\end{remark}
\begin{proof}
In order to make the proof easier to read, Theorem \ref{quasi-ergodic-critical} will be proved for $s=0$ in what follows. The proof for a general $s$ will be very similar to the following one.
  
First, using the variable change $u = qt$, one has, for any $\mu \in \cM_1((-1,1))$, $t > 0$ and $f$ continuous and bounded measurable,
$$\frac{1}{t} \int_0^t \E_{\mu,0}(f(X_u) | \tau_X > t)du = \int_0^1 \E_{\mu,0}(f(X_{qt}) | \tau_X > t)dq.$$
As a result, it is enough to show the weak convergence of $\left(\P_{\mu,0}(X_{qt} \in \cdot | \tau_X > t)\right)_{t \geq 0}$ for any $q \in (0,1)$, then to conclude with the Lebesgue's dominated convergence theorem.\\
Let  $\mu \in \cM_1((-1,1))$, $q \in (0,1)$ and $f$ continuous and bounded measurable. By the Markov property and \eqref{time-change}, for any $t \geq 0$,
\begin{align}
 \E_{\mu,0}(f(X_{qt})\1_{\tau_X > t}) &= \E_{\mu,0}\left(f(X_{qt}) \1_{\tau_X > qt} \P_{X_{qt},qt}(\tau_X > t)\right) \notag \\
&= \E_{\mu,0}\left(f_t\left(X_{qt}\right) \1_{\tau_X > qt}\right),
\label{ft2}
\end{align}
where we set for any $y \in (-1,1)$,
$$f_t(y) := f(y)  \P_{y,qt}\left[\tau_X > t\right].$$
By \eqref{time-change}, for any $y \in (-1,1)$ and $t \geq 0$,
\begin{align*}
f_t(y) &= f(y)\P^Z_{y}\left[\tau_Z > \log\left(\frac{t+1}{qt+1}\right)\right].
\end{align*} 
Now define for any $y \in (-1,1)$,
$$f_\infty(y) := f(y)  \P^Z_{y}\left[\tau_Z > -\log\left(q\right)\right].$$
It is easy to see that $(f_t)_{t \geq 0}$ converges pointwise towards $f_\infty$. Moreover, a simple calculus computation entails that the function $t \to \frac{t+1}{qt+1}$ is increasing, which implies that the sequence $(f_t)_{t \geq 0}$ is a decreasing sequence of continuous functions defined on $[-1,1]$. Likewise, $f_\infty$ is continuous on $[-1,1]$. As a result, by Dini's theorem for the decreasing sequences of continuous functions, the pointwise convergence is equivalent to the uniform convergence on $[-1,1]$. Thus, 
\begin{equation}
\label{unif}\lim_{t \to \infty} \sup_{y \in (-1,1)} |f_t(y) - f_\infty(y)| = 0.\end{equation}
Now, let us show that
\begin{equation}
\label{cequelonve}
\lim_{t \to \infty}(qt+1) \E_{\mu,0}(f_\infty(X_{qt})\1_{\tau_X >qt}) = \mu(\eta_{OU}) \alpha_{OU}(f_\infty).
\end{equation}
To show this, let us begin with
\begin{align*}
&(qt+1) \E_{\mu,0}\left(f_\infty(X_{qt})\1_{\tau_X > qt}\right) =(qt+1)  \P_{\mu,0}(\tau_X > qt) \times \E_{\mu,0}(f_\infty(X_{qt})|{\tau_X > qt}).
\end{align*}
 On the one hand, by Proposition \ref{1}, 
 $$\lim_{t \to \infty} (qt+1)  \P_{\mu,0}(\tau_X > qt) = \mu(\eta_{OU}).$$
 On the other hand, by \eqref{quasi-erg}, 
 $$\lim_{t \to \infty} \E_{\mu,0}(f_\infty(X_{ qt})|{\tau_X > qt}) = \alpha_{OU}(f_\infty).$$
\eqref{cequelonve} follows from these two convergences. Now, by \eqref{cequelonve} and \eqref{unif},    
\begin{align*}
   (qt+1)  \E_{\mu,0}\left(f_t\left(X_{qt}\right) \1_{\tau_X > qt}\right) & = (qt+1)  \E_{\mu,0}\left(f_\infty\left(X_{qt}\right) \1_{\tau_X > qt}\right) \medskip \\&~~~~~~~~~~~~~~~~~~~~ + (qt+1)  \E_{\mu,0}\left(\left[f_\infty\left(X_{qt}\right) - f_t\left(X_{qt}\right)\right] \1_{\tau_X > qt}\right) \medskip \\ & \underset{t \to \infty}{\longrightarrow} \mu(\eta_{OU}) \alpha_{OU}(f_\infty),
\end{align*}
because
\begin{multline*}\left|(qt+1)  \E_{\mu,0}\left[\left(f_\infty\left(X_{qt}\right) - f_t\left(X_{qt}\right)\right) \1_{\tau_X > qt}\right]\right|  \leq  (qt+1)  \P_{\mu,0}(\tau_X > qt) \times  \sup_{y \in (-1,1)} |f_t(y) - f_\infty(y)| \underset{t \to \infty}{\longrightarrow} 0.\end{multline*}
Hence, using \eqref{ft2},
\begin{align*}\lim_{t \to \infty} (qt+1) \E_{\mu,0}(f(X_{qt})\1_{\tau_X > t})&= \mu(\eta_{OU}) \alpha_{OU}(f_\infty)\\
&= \mu(\eta_{OU}) \int_{(-1,1)} f(x) \P^Z_{x}(\tau_Z > -\log(q)) \alpha_{OU}(dx).\end{align*}
Moreover, taking $f = \1$, using \eqref{exit-time} and \eqref{lambda},
\begin{align*}\lim_{t \to \infty} (qt+1) \P_{\mu,0}({\tau_X > t})
&= \mu(\eta_{OU}) \P^Z_{\alpha_{OU}}(\tau_Z > -\log(q)) \\
&= \mu(\eta_{OU}) q.\end{align*}
Thus, we deduce that 
$$\lim_{t \to \infty}   \E_{\mu,0}(f(X_{qt}) | {\tau_X > t}) =  q^{-1} \int_{(-1,1)} \alpha_{OU}(dx) f(x)\P_x(\tau_Z > -\log(q)).$$ 
Then, by Lebesgue's theorem, for any probability measure $\mu$ on $(-1,1)$ and any bounded measurable function $f$,
\begin{align*}
\lim_{t \to \infty} \frac{1}{t} \int_0^t \E_{\mu,0}(f(X_u)|{\tau_X>t})du &= \lim_{t \to \infty} \int_0^1 \E_{\mu,0}(f(X_{qt})|{\tau_X>t})dq \\
&= \int_0^1 q^{-1} \int_{(-1,1)} f(x) \P^Z_x(\tau_Z> -\log(q)) \alpha_{OU}(dx) dq \\
&= \int_{(-1,1)} \alpha_{OU}(dx) f(x) \int_0^1 q^{-1} \P^Z_x(\tau_Z > -\log(q))dq \\
&=  \int_{(-1,1)} \alpha_{OU}(dx) f(x) \int_0^\infty  \P^Z_x(\tau_Z > s)ds \\
&= \int_{(-1,1)} \alpha_{OU}(dx) f(x) \E^Z_x\left(\tau_Z\right). 
\end{align*}
This concludes the proof.
\end{proof}
\begin{remark}
As it is seen in the previous proof, the quasi-ergodic distribution for $X$ is obtained computing the limit of $\P_{\mu,s}(X_{qt} \in \cdot | \tau_X > t)$, when $t$ goes to infinity and for $q \in (0,1)$ fixed. By the time change $t \mapsto \log(1+t)$, this limit is the same as the one of 
$$\P^Z_\mu(Z_{\log(qt)} \in \cdot | \tau_Z > \log(t)) = \P^Z_\mu(Z_{\log(q) + \log(t)} \in \cdot | \tau_Z > \log(t)),$$
with $\log(q) < 0$. Such a limiting probability measure is called a $-\log(q)$-Yaglom limit and is different from the invariant measure of the $Q$-process of $Z$ (obtained taking $q = +\infty$). This provides a heuristic reason explaining why the quasi-ergodic distribution for $X$ is different from the one for the Ornstein-Uhlenbeck process $Z$.  
\end{remark} 
\section{The subcritical case : $\kappa < \frac{1}{2}$.} 
In this section, we will show that a quasi-limiting distribution, quasi-ergodic distribution and a $Q$-process exist when $\kappa < \frac{1}{2}$.   
To do this, the strategy will be to compare (in a sense described later) the process $X$ to the process $Y$ defined by  
$$Y_t := \int_0^t \frac{1}{(u+1)^\kappa} dB_u,~~~~\forall t \geq 0.$$
Then the quasi-stationarity for $X$ will be deduced from the one for $Y$. 
\subsection{Approximation by $Y$ through asymptotic pseudotrajectories}
Denote by $\tau_Y := \inf\{t \geq 0 : |Y_t| = 1\}$. The aim of this subsection is to show the following proposition: 
\begin{proposition} 
\label{asymptotic-pseudotrajectory}
There exists a function $F : \R_+ \to \R_+$ such that
$$\lim_{s \to \infty} F(s) = 0,$$
and such that, for any $0 \leq s \leq t \leq T$, for any probability measure $\mu$ on $(-1,1)$,

\begin{equation}
\label{uniform-convergence}
 ||\P_{\mu,s}(X_t \in \cdot | \tau_{X} > T) - \P^Y_{\mu,s}(Y_t \in \cdot | \tau_{Y} > T)||_{TV} \leq F(s).
\end{equation}
\end{proposition}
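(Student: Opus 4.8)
The plan is to realize both processes $X$ and $Y$ on the same probability space as explicit functionals of a single Brownian motion, so that the quantity in \eqref{uniform-convergence} becomes a comparison of two conditioned laws driven by nearly-equal random trajectories. Write $X_t = B_t/(t+1)^\kappa$ and $Y_t = \int_0^t (u+1)^{-\kappa}\,dB_u$. An integration by parts gives $Y_t = B_t/(t+1)^\kappa + \kappa \int_0^t B_u (u+1)^{-\kappa-1}\,du$, so $X_t = Y_t - \kappa\int_0^t B_u(u+1)^{-\kappa-1}\,du = Y_t - R_t$, where $R_t$ is a process of bounded variation. The key point is that $R_t$ is a \emph{slowly varying} drift: since $B_u = O(\sqrt{u\log\log u})$ a.s.\ and $\kappa<\tfrac12$, the increment $R_t - R_s$ is small uniformly for $t\ge s$ once $s$ is large, in an appropriate sense. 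More precisely, one wants to control $\sup_{t\ge s}|R_t - R_s|$ (or its $L^p$-norm, or its tail) by a deterministic function of $s$ tending to $0$; this is where the subcriticality $\kappa<\tfrac12$ is used, because $\int_s^\infty (u+1)^{-\kappa-1}\sqrt{u}\,du$ converges and is $O(s^{1/2-\kappa})$.

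Next I would pass from the pathwise closeness of $X$ and $Y$ to closeness of the \emph{conditioned} marginals. For fixed $0\le s\le t\le T$ and a test function $f$ with $\|f\|_\infty\le 1$, write
$$
\P_{\mu,s}(X_t\in\cdot\mid\tau_X>T)(f) = \frac{\E_{\mu,s}\big(f(X_t)\1_{\tau_X>T}\big)}{\P_{\mu,s}(\tau_X>T)},
$$
and similarly for $Y$. Using $X_\cdot = Y_\cdot - R_\cdot$ with $R$ small after time $s$, one bounds the difference of numerators and the difference of denominators by the probability that the two trajectories straddle the boundary $\{-1,1\}$ differently on $[s,T]$, i.e.\ by a quantity like $\P\big(\sup_{u\in[s,T]}|R_u - R_s| > \varepsilon,\ \tau_Y\in[s,T]\text{ or }|Y_u|\in(1-\varepsilon,1+\varepsilon)\text{ for some }u\big)$ plus $\sup_{u\ge s}|f(X_u)-f(Y_u)|$ type terms. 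The boundary-straddling event must be handled carefully: one splits according to whether $\sup_{u\ge s}|R_u - R_s|$ exceeds a threshold $\varepsilon(s)\to 0$, and on the good event one shows that $\tau_X$ and $\tau_Y$ can only disagree on $[s,T]$ if $Y$ spends time in an $\varepsilon(s)$-neighborhood of $\pm1$, whose probability — \emph{after} conditioning or relative to the survival probability — is small.

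The main obstacle is precisely this last point: making the comparison \emph{uniform in $t$, $T$ and $\mu$}, i.e.\ getting a bound $F(s)$ that does not blow up as $T\to\infty$ even though $\P_{\mu,s}(\tau_X>T)\to 0$. The naive bound (difference of numerators)/(denominator) degrades as $T\to\infty$. To fix this one should not estimate numerator and denominator separately but rather couple the two conditioned processes directly: condition both $X$ and $Y$ on survival up to $T$ and use that, under this conditioning, the driving path still cannot approach $\pm1$ too much mass-wise, together with a small-ball/non-degeneracy estimate for $Y$ near the boundary that is uniform in the conditioning horizon (this is the kind of estimate available from the Champagnat--Villemonais theory for $Y$, which will be established for $Y$ in the later subsections 4.2–4.3). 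In short: realize $X = Y - R$ on one space, show $R$ has small oscillation after time $s$ with high probability via the law of iterated logarithm and $\kappa<\tfrac12$, and then transfer this to the conditioned laws using a boundary-regularity (non-stickiness) estimate for the conditioned process $Y$ that is uniform in the horizon $T$. The function $F$ is then built from $\varepsilon(s) = o(1)$ plus the probability that the oscillation of $R$ after $s$ exceeds $\varepsilon(s)$.
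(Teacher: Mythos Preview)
Your pathwise coupling idea contains a sign error that is fatal. You claim that $\int_s^\infty (u+1)^{-\kappa-1}\sqrt{u}\,du$ converges when $\kappa<\tfrac12$; in fact the integrand is $\sim u^{-1/2-\kappa}$, which is integrable at infinity only when $\kappa>\tfrac12$. In the subcritical regime the oscillation $\sup_{t\ge s}|R_t-R_s|$ is \emph{not} small: using the law of the iterated logarithm it grows like $t^{1/2-\kappa}$, and even on the survival event $\{\tau_X>T\}$ (where $|B_u|<(u+1)^\kappa$) one only gets $|R_T-R_s|\le \kappa\int_s^T(u+1)^{-1}\,du=\kappa\log\frac{T+1}{s+1}$, which blows up as $T\to\infty$. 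So the drift is not a small perturbation in any pathwise sense, and no boundary-regularity argument for $Y$ can repair this: the trajectories of $X$ and $Y$ genuinely diverge.

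The paper's route is different: it uses Girsanov's theorem to express the law of $Y$ (stopped at $\tau_Y$) as the law of $X$ (stopped at $\tau_X$) under a new measure with Radon--Nikodym density $\mathcal{E}(M)_{s,T}=\exp(\tfrac12 N_{s,T})$, where $M_t=\int_0^t\kappa(u+1)^{\kappa-1}X_u\,dB_u$. The crucial step is then to apply It\^o's formula to $t\mapsto\kappa(t+1)^{2\kappa-1}X_t^2$ in order to rewrite the stochastic integral in $N_{s,T}$ as boundary terms plus ordinary integrals; on $\{\tau_X>T\}$ every term is then bounded deterministically by a multiple of $(s+1)^{2\kappa-1}$, except for a purely deterministic factor $(\frac{s+1}{T+1})^{\kappa/2}$ which cancels between numerator and denominator of the conditioned law. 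This is what produces the uniformity in $T$ that you correctly identified as the main obstacle: the Radon--Nikodym density, restricted to $\{\tau_X>T\}$ and renormalised by the deterministic factor, lies in $[1-\phi(s),1+\phi(s)]$ with $\phi(s)=O((s+1)^{2\kappa-1})\to0$, and the total variation bound follows directly from this two-sided control. The moral is that although $X$ and $Y$ are far apart pathwise, their \emph{laws on the survival event} are mutually absolutely continuous with density uniformly close to $1$.
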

\begin{remark}   
\eqref{uniform-convergence} provides us with a decay towards $0$ uniformly in the initial law, $t$ and $T$. It can be seen as an analogue of the asymptotic pseudotrajectories introduced by Benaïm and Hirsch in \cite{BH1996}. See also \cite{benaim1999} for more details about asymptotic pseudotrajectories in the general case.
\end{remark}
\begin{proof}[Proof of Proposition \ref{asymptotic-pseudotrajectory}]
By Itô's formula, one has, for any $t \geq 0$,
$$X_t = X_0 + Y_t - \int_0^t \kappa (s+1)^{-\kappa - 1}B_s ds =  X_0 + Y_t - <Y,M>_t,$$
where
$$M_{t} := \int_0^t \kappa(u+1)^{\kappa-1}X_udB_u =  \int_0^t \kappa(u+1)^{2\kappa-1}X_udX_u +  \int_0^t (\kappa(u+1)^{\kappa-1}X_u)^2du.$$
For any $s \leq t$, denote by $$M_{s,t} := M_t - M_s = \int_s^t \kappa(u+1)^{\kappa-1}X_udB_u,$$ and $$<M,M>_{s,t} := <M,M>_t - <M,M>_s = \int_s^t (\kappa(u+1)^{\kappa-1}X_u)^2du.$$
Moreover, denote by $\mathcal{E}(M)_{s,t}$ the exponential local martingale defined by
\begin{align*}
\mathcal{E}(M)_{s,t}& := \exp\left(M_{s,t} - \frac{1}{2}<M,M>_{s,t}\right)\\ &= \exp\left(\int_s^t \kappa(u+1)^{2\kappa-1}X_udX_u + \frac{1}{2}\int_s^t (\kappa(u+1)^{\kappa-1}X_u)^2du\right) \\
&= \exp\left(\frac{1}{2}N_{s,t}\right),
\end{align*}
where $$N_{s,t} := 2 \int_s^t \kappa(u+1)^{2\kappa-1}X_udX_u + \int_s^t (\kappa(u+1)^{\kappa-1}X_u)^2du.$$ By It\^{o}'s formula applied to $t \to \kappa(t+1)^{2\kappa-1}X_t^2$, for any $s \leq t$, 
\begin{align*}
N_{s,t} &= \kappa(t+1)^{2\kappa-1}X_t^2 - \kappa(s+1)^{2\kappa-1}X_s^2 \\&~~~~~~~~~~~~- \int_s^t [\kappa(u+1)^{2\kappa-1}]'X^2_udu - \int_s^t \frac{\kappa}{u+1}du + \int_s^t (\kappa(u+1)^{\kappa-1}X_u)^2du \\
&= \kappa(t+1)^{2\kappa-1}X_t^2 - \kappa(s+1)^{2\kappa-1}X_s^2 \\&~~~~~~~~~~~~- \int_s^t [\kappa(u+1)^{2\kappa-1}]'X^2_udu - \kappa \log \left(\frac{t+1}{s+1}\right) + \int_s^t (\kappa(u+1)^{\kappa-1}X_u)^2du.
\end{align*}
Note that the process $(N_{s,t \land \tau_X})_{s \leq t}$ is almost surely uniformly bounded, thus $\mathcal{E}(M)_{s,t \land \tau_X}$ is a martingale.  
For any $t \geq s \geq 0$ and $\mu \in \cM_1((-1,1))$, define $\G_{\mu,s}$ the probability measure satisfying
$$\G_{\mu,s}(A) = \E_{\mu,s}(\mathcal{E}(M)_{s,t \land \tau_X} \1_{A}), ~~~~\forall A \in \sigma(X_u, s \leq u \leq t).$$
Then, by the Girsanov theorem, the law of $(X_{t \land \tau_X})_{t \geq s}$ under $\G_{\mu,s}$ is the law of $(Y_{t \land \tau_Y})_{t \geq s}$ under $\P_{\mu,s}$. In particular, for any $S$ measurable set, probability measure $\mu$ on $(-1,1)$ and $0 \leq s \leq t \leq T$,
\begin{align}
    \P^Y_{\mu,s}(Y_t \in S, \tau_Y > T) &= \G_{\mu,s}(X_t \in S, \tau_X > T) \notag \\
    &= \E_{\mu,s}\left(\mathcal{E}(M)_{s,T \land \tau_X} \1_{X_t \in S, \tau_X > T}\right) \notag \\
    &= \E_{\mu,s}\left(\mathcal{E}(M)_{s,T} \1_{X_t \in S, \tau_X > T}\right) \notag \\
    &= \E_{\mu,s}\left(\exp\left(\frac{1}{2}N_{s,T}\right) \1_{X_t \in S, \tau_X > T}\right) \notag \\
&= \left( \frac{s+1}{T+1}\right)^{\frac{\kappa}{2}} \E_{\mu,s}\left(\exp\left(\frac{1}{2}N'_{s,T}\right) \1_{X_t \in S, \tau_X > T}\right),
\label{rev}
\end{align}
with $N'_{s,T} = N_{s,T} + \kappa \log \left(\frac{T+1}{s+1}\right)$. Thus,
$$\P^Y_{\mu,s}(Y_t \in S | \tau_Y >T) = \frac{\E_{\mu,s}\left(\exp\left(\frac{1}{2}N'_{s,T}\right) \1_{X_t \in S,\tau_X>T}\right)}{\E_{\mu,s}\left(\exp\left(\frac{1}{2}N'_{s,T}\right)\1_{\tau_X>T}\right)}.$$
By this last inequality and by a tringular inequality, one has, for any $0 \leq s \leq t \leq T$ and for any measurable set $S$,
\begin{align*}
&|\P_{\mu,s}(X_t \in S | \tau_{X} > T) - \P^Y_{\mu,s}(Y_t \in S | \tau_{Y} > T)|\\
&\leq  \left| \frac{\P_{\mu,s}(X_t \in S, \tau_{X} > T)}{\P_{\mu,s}(\tau_{X} > T)} - \frac{\E_{\mu,s}\left(\exp\left(\frac{1}{2}N'_{s,T}\right) \1_{X_t \in S,\tau_X>T}\right)}{\P_{\mu,s}(\tau_{X} > T)} \right| \\
&~~~~~~~~~~~~~~~~~~~~~~~~+  \left|  \frac{\E_{\mu,s}\left(\exp\left(\frac{1}{2}N'_{s,T}\right) \1_{X_t \in S,\tau_X>T}\right)}{\P_{\mu,s}(\tau_{X} > T)} - \frac{\E_{\mu,s}\left(\exp\left(\frac{1}{2}N'_{s,T}\right) \1_{X_t \in S,\tau_X>T}\right)}{\E_{\mu,s}\left(\exp\left(\frac{1}{2}N'_{s,T}\right)\1_{\tau_X>T}\right)}  \right| \\
&\leq  \underbrace{\frac{| \P_{\mu,s}(X_t \in S, \tau_X > T) - \E_{\mu,s}\left(\exp\left(\frac{1}{2}N'_{s,T}\right) \1_{X_t \in S,\tau_X>T}\right)|}{\P_{\mu,s}(\tau_{X} > T)}}_{=:  ~C_{s}(\mu,t,T,S)} \\ &~~~~~~~~~~~~~~~~~~~~~~~~+  \frac{\E_{\mu,s}\left(\exp\left(\frac{1}{2}N'_{s,T}\right)\1_{\tau_X>T}\right)}{\P_{\mu,s}(\tau_X>T)}  \times \underbrace{\left|1- \frac{\P_{\mu,s}(\tau_{X} > T)}{\E_{\mu,s}\left(\exp\left(\frac{1}{2}N'_{s,T}\right)\1_{\tau_X>T}\right)} \right|}_{=: A_{s}(\mu,T)}.
\end{align*}
In order to show \eqref{uniform-convergence}, we will bound the functions $A_s$ and $C_s$.
\begin{description}
\item[Step $1$ : Upper bound for $C_s$]. \newline

For any $0 \leq s \leq t \leq T$, probability measure $\mu$ and $S$ measurable set, 
\begin{align*}C_s(\mu,t,T,S) &= \left| \underbrace{\E_{\mu,s}\left[\left(\exp\left(\frac{1}{2}N'_{s,T}\right)-1\right) \1_{X_t \in S} \middle| \tau_X > T \right]}_{=: f(s,t,T,\mu,S)}\right|. \end{align*}
On the event $\{\tau_X > T\}$, $X_u^2 < 1$ for any $0 \leq u \leq T$. Hence, the function $f$ defined as above is bounded as follows:
\begin{multline*}\exp\left[- \frac{\kappa}{2}(s+1)^{2\kappa-1}-\frac{1}{2}\int_s^T [\kappa(u+1)^{2\kappa-1}]'du \right]-1 \\ \leq f(s,t,T,\mu,S) \leq \exp\left[\frac{1}{2} \kappa(T+1)^{2\kappa-1} + \frac{1}{2}\int_s^T (\kappa(u+1)^{\kappa-1})^2du\right]-1.\end{multline*}
In particular, for any $0 \leq s \leq t \leq T$, for any probability measure $\mu$ and $S$ measurable set,  
\begin{align*}|f(s,t,T,\mu,S)| \leq &\left[1 - \exp\left(-\frac{1}{2}\kappa(s+1)^{2\kappa-1} \right)\right] \lor \left[\exp\left(\frac{1}{2} \left(\kappa + \frac{\kappa^2}{1-2\kappa}\right)(s+1)^{2\kappa-1}\right)-1\right] \\
& =: \phi(s).\end{align*}
Hence,
$$C_s(\mu,t,T,S) \leq \phi(s).$$
\item[Step $2$ : Upper bound for $A_s$]. \newline
Taking $S = (-1,1)$, 
\begin{align*}
C_s(\mu,t,T,(-1,1)) &=   \frac{|\E_{\mu,s}\left(\exp\left(\frac{1}{2}N'_{s,T}\right) \1_{X_t \in (-1,1),\tau_X>T}\right) - \P_{\mu,s}(X_t \in (-1,1), \tau_X > T)|}{\P_{\mu,s}(\tau_{X} > T)} \\
 &= \left|\frac{\E_{\mu,s}\left(\exp\left(\frac{1}{2}N'_{s,T}\right) \1_{\tau_X>T}\right)}{\P_{\mu,s}(\tau_X > T)} - 1 \right|. \end{align*}
According to the previous bound we have shown, for any 
 for any $s \leq T$, for any probability measure $\mu$ on $(-1,1)$, 
\begin{equation}
    \label{cpasmal}
    1-\phi(s) \leq \frac{\E_{\mu,s}\left(\exp\left(\frac{1}{2}N'_{s,t}\right) \1_{\tau_X>t}\right)}{\P_{\mu,s}(\tau_X > t)} \leq 1 + \phi(s).
\end{equation}
We deduce from this last inequality that
$$A_s(\mu,T) \leq \left(1-\frac{1}{1+\phi(s)}\right) \lor \left(\frac{1}{1-\phi(s)}-1\right) =: \psi(s).$$
\end{description}
We set then, for any $s \geq 0$,
$$F(s) = \phi(s) + \psi(s)(1+\phi(s)),$$ which concludes the proof.
\end{proof}

\subsection{Quasi-stationarity for $Y$}
Now, we are interested in the quasi-stationarity for the process $Y$. Note that,
by the Dubins-Schwartz theorem, there exists a Brownian motion $\tilde{B}$ such that, for any $t \geq 0$,
\begin{equation}
\label{ds}
Y_t  = \tilde{B}_{\frac{(t+1)^{1-2\kappa}-1}{1-2\kappa}}.\end{equation}
Denote $\tau_{\tilde{B}} := \inf\{t \geq 0 : |\tilde{B}_t| = 1\}$.
Then, by \eqref{ds}, for any initial law $\mu$ and $s \geq 0$,
\begin{align*}
    \P^Y_{\mu,s}(Y_t \in \cdot | \tau_Y > t) &= \P^{\tilde{B}}_{\mu}\left(\tilde{B}_{\frac{(t+1)^{1-2\kappa}-(s+1)^{1-2\kappa}}{1-2\kappa}} \in \cdot \middle| \tau_{\tilde{B}} > \frac{(t+1)^{1-2\kappa}-(s+1)^{1-2\kappa}}{1-2\kappa}\right).
\end{align*}
It is well known that a Brownian motion absorbed at $(-1,1)^c$ admits a unique quasi-stationary distribution $\alpha_{Bm}$, whose the explicit formula is
$$\alpha_{Bm}(dx) := \frac{1}{2} \cos\left(\frac{\pi}{2} x\right)dx,$$
and that there exists $\lambda_{Bm} > 0$ (see \cite{MV2012}) such that
$$\P_{\alpha_{Bm}}^{\tilde{B}}(\tau_Y > t) = e^{-\lambda_{Bm}t},~~~~\forall t \geq 0.$$
Remark that $\lambda_{Bm}$ satisfies also   
$$\alpha_{Bm}\left( \frac{1}{2} \Delta f\right) = - \lambda_{Bm} \alpha_{Bm}(f),~~~~\forall f \in \{g \in \cC^2([-1,1]) : g(1) = g(-1) = 0\},$$
and $\lambda_{Bm} = \frac{\pi^2}{8}$.  
The Brownian motion absorbed at $(-1,1)^c$ satisfies the Champagnat-Villemonais condition $(A1)-(A2)$ in \cite{CV2014}, which implies the existence of $C_{Bm},\gamma_{Bm} > 0$ such that, for any probability measure $\mu$ and any $t \geq 0$,
$$||\P^{\tilde{B}}_\mu(\tilde{B}_t \in \cdot | \tau_{\tilde{B}} > t) - \alpha_{Bm}||_{TV} \leq C_{Bm} e^{-\gamma_{Bm} t}.$$ 
Thus, using the Dubins-Schwartz transformation, for any $s \leq t$ and any probability measure $\mu$
\begin{equation}
\label{exponential-decay}
||\P^Y_{\mu,s}(Y_t \in \cdot | \tau_{Y} > t) - \alpha_{Bm}||_{TV} \leq C_{Bm} \exp\left(-\gamma_{Bm} \times \frac{(t+1)^{1-2\kappa}-(s+1)^{1-2\kappa}}{1-2\kappa}\right).
\end{equation}
Moreover, let $\eta_{Bm}$ be the function defined by 
$$\eta_{Bm}(x) := \lim_{t \to \infty} e^{\lambda_{Bm}t} \P_x^{\tilde{B}}(\tau_{\tilde{B}} > t) = \frac{1}{8}\cos\left(\frac{\pi x}{2}\right).$$ 
This definition makes sense by Proposition 2.3. in \cite{CV2014}. We recall moreover that $\eta_{Bm}$ is positive on $(-1,1)$, vanishing on $\{-1,1\}$, $\alpha_{Bm}(\eta_{Bm}) = 1$ and the convergence holds uniformly on $[-1,1]$. Then, 
in the same way as in the critical case, an analogous version of Propositions \ref{1} and \ref{q-proc-prop} can be stated as follows: 
\begin{proposition}
\begin{enumerate}[(i)]
\label{prop}
\item
\label{1.prop}  For any $x \in (-1,1)$ and any $s \geq 0$,
$$\eta_{Bm}(x) = \lim_{t \to \infty} e^{\lambda_{Bm} \frac{(t+1)^{1-2\kappa} - (s+1)^{1-2\kappa}}{1-2\kappa}} \P^Y_{x,s}(\tau_Y > t),$$
where the convergence holds uniformly on $[-1,1]$. 
\item There exists a $Q$-process for $Y$ in the sense of Definition \ref{qproc-def} and the family of probability measure $(\Q_{x,s}^Y)_{x \in (-1,1), s \geq 0}$ defined by $\Q_{s,x}^Y(Y_{[s,t]} \in \cdot) := \lim_{T \to \infty} \P^Y_{x,s}(Y_{[s,t]} \in \cdot | T < \tau_Y)$ satisfies also 
$$ \Q_{x,s}^Y(Y_{[s,t]} \in \cdot) = \E_{x,s}\left(\1_{Y_{[s,t]} \in \cdot, t < \tau_Y} e^{\lambda_{Bm} \frac{(t+1)^{1-2\kappa} - (s+1)^{1-2\kappa}}{1-2\kappa}} \frac{\eta_{Bm}(Y_t)}{\eta_{Bm}(x)}\right),$$
for any $x \in (-1,1)$ and $s \leq t$.
\item The probability measure $\beta_{Bm}$ defined by
\begin{equation}
    \label{quasi-ergodic}
    \beta_{Bm}(dx) = \eta_{Bm}(x) \alpha_{Bm}(dx)
\end{equation}
is the unique stationary distribution of $Y$ under $(\Q_{x,s}^Y)_{x \in (-1,1),s \geq 0}$ and, for any $x \in (-1,1)$ and $s \geq 0$,
$$||\Q_{x,s}^Y(Y_t \in \cdot) - \beta_{Bm}||_{TV} \leq  C_{Bm} \exp\left(-\gamma_{Bm} \times \frac{(t+1)^{1-2\kappa}-(s+1)^{1-2\kappa}}{1-2\kappa}\right),$$
where $C_{Bm}$ and $\gamma_{Bm}$ are the same as in \eqref{exponential-decay}.
\end{enumerate}
\end{proposition}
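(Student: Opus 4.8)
The plan is to reduce everything to the time-homogeneous Brownian motion $\tilde B$ via the Dubins--Schwarz identity \eqref{ds} and then transport the known results for Brownian motion absorbed at $(-1,1)^c$ through the deterministic time change $\theta(t) := \frac{(t+1)^{1-2\kappa}-1}{1-2\kappa}$. Note that $\theta$ is a strictly increasing bijection from $[0,\infty)$ onto $[0,\infty)$ (since $\kappa < \frac12$), so $\tau_Y > t$ if and only if $\tau_{\tilde B} > \theta(t)$, and more generally, under $\P^Y_{x,s}$, the process $(Y_{s+u})_{u\ge 0}$ is (in law) a Brownian motion started at $x$, run up to the clock $\theta(s+u)-\theta(s)$, and killed at $\pm 1$; this is exactly the content displayed just before the statement. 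All three assertions are then image statements under this change of clock.

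For part \eqref{1.prop}: apply Proposition 2.3 of \cite{CV2014} to the Brownian motion absorbed at $\{-1,1\}$, which gives $\eta_{Bm}(x) = \lim_{u\to\infty} e^{\lambda_{Bm} u}\P^{\tilde B}_x(\tau_{\tilde B}>u)$ with uniform convergence on $[-1,1]$, and substitute $u = \theta(t)-\theta(s) = \frac{(t+1)^{1-2\kappa}-(s+1)^{1-2\kappa}}{1-2\kappa}$, which tends to $\infty$ as $t\to\infty$ for each fixed $s$; uniformity in $x$ is preserved because the time change does not involve $x$. For part (ii): Proposition 3.1 of \cite{CV2014} applied to $\tilde B$ gives the $Q$-process via the $h$-transform with $h=\eta_{Bm}$ and the normalizing factor $e^{\lambda_{Bm} u}$; pushing this through the time change, the conditioning $\{T<\tau_Y\}$ becomes $\{\theta(T)<\tau_{\tilde B}\}$, the limit $T\to\infty$ corresponds to $\theta(T)\to\infty$, and one reads off the claimed formula for $\Q^Y_{x,s}$, the factor $e^{\lambda_{Bm}u}$ becoming $e^{\lambda_{Bm}(\theta(t)-\theta(s))}$. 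One only needs to check that the Markov property and the martingale property of the relevant density are respected by a deterministic time change, which is immediate. For part (iii): the $Q$-process of $\tilde B$ has unique invariant distribution $\beta_{Bm}(dx) = \eta_{Bm}(x)\alpha_{Bm}(dx)$ with exponential convergence $\|\P^{\tilde B,Q}_x(\cdot\in dy\ \text{at time }u) - \beta_{Bm}\|_{TV} \le C_{Bm} e^{-\gamma_{Bm} u}$ (again from \cite{CV2014}); substituting $u = \theta(t)-\theta(s)$ yields the stated bound and, since the invariant measure is unchanged by a time change of a Markov process, $\beta_{Bm}$ remains the unique stationary distribution of $Y$ under $(\Q^Y_{x,s})$.

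The main point to be careful about — rather than a genuine obstacle — is that $Y$ is time-inhomogeneous while $\tilde B$ is not, so one must phrase the $Q$-process statements relative to the two-parameter family $(\Q^Y_{x,s})_{x\in(-1,1),s\ge 0}$ and verify that "stationary distribution of a time-inhomogeneous Markov process" is the right notion: here it means a law $\beta_{Bm}$ such that $\int \Q^Y_{x,s}(Y_t\in\cdot)\,\beta_{Bm}(dx) = \beta_{Bm}$ for all $s\le t$. Because the $Q$-process of $\tilde B$ is genuinely time-homogeneous and $\theta$ is a bijection, the semigroup of $Y$ under $(\Q^Y_{x,s})$ is a time-changed copy of the stationary $Q$-semigroup of $\tilde B$, so a measure is invariant for the former iff it is invariant for the latter, giving uniqueness for free. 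The uniform (in $x$) convergence in \eqref{1.prop} requires no extra work since $\theta(t)-\theta(s)\to\infty$ depends only on $s,t$. Everything else is a routine transcription of \cite{CV2014} exactly as was done in the critical case (Propositions \ref{1} and \ref{q-proc-prop}), with $e^{-u/2}$ replaced by the Dubins--Schwarz clock; hence the statement "Straightforward using \eqref{ds} and \cite{CV2014}" is accurate and I would keep the proof short.
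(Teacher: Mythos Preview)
Your approach is exactly the paper's: reduce to the time-homogeneous Brownian motion via the Dubins--Schwarz clock \eqref{ds} and then invoke Propositions~2.3 and~3.1 of \cite{CV2014}, just as in Propositions~\ref{1} and~\ref{q-proc-prop} for the critical case. The paper's own proof is the one-line ``The proof is essentially the same as for the proof of Proposition~\ref{q-proc-prop}'', so your expanded write-up (including the care about the time-inhomogeneous notion of stationarity) is correct and more detailed than what the paper provides.
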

\begin{proof}
The proof is essentially the same as for the proof of Proposition \ref{q-proc-prop}.
\end{proof}
\subsection{Quasi-limiting distribution of $X$}
Now we will use Proposition \ref{asymptotic-pseudotrajectory} in order to show the existence of a quasi-limiting distribution for the process $X$. 
\begin{theorem}
\label{thm}
 For any probability measure $\mu$ on $(-1,1)$ and any $0 \leq s \leq t$,
\begin{equation}
\label{inequality}
||\P_{\mu,s}(X_t \in \cdot | \tau_X > t) - \alpha_{Bm}||_{TV} \leq  F\left(\frac{t}{2}\right) + C_{Bm} \exp\left(-\gamma_{Bm} \times \frac{(t+1)^{1-2\kappa}-(\frac{t}{2}+1)^{1-2\kappa}}{1-2\kappa}\right), \end{equation}
where the function $F$ is defined in Proposition \ref{asymptotic-pseudotrajectory}.
In particular, for any $\mu \in \cM_1((-1,1))$ and any $s \geq 0$,
$$\lim_{t \to \infty} \P_{\mu,s}(X_t \in \cdot | \tau_X > t) = \alpha_{Bm}.$$
\end{theorem}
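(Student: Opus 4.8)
The plan is to prove Theorem \ref{thm} by combining Proposition \ref{asymptotic-pseudotrajectory} (the asymptotic pseudotrajectory estimate comparing $X$ with $Y$) with the quasi-stationarity result \eqref{exponential-decay} for the process $Y$. The starting point is the triangle inequality: for any $0 \le s \le t$ and any measurable set $S$, split
\begin{align*}
|\P_{\mu,s}(X_t \in S | \tau_X > t) - \alpha_{Bm}(S)|
&\le |\P_{\mu,s}(X_t \in S | \tau_X > t) - \P^Y_{\mu,s}(Y_t \in S | \tau_Y > t)| \\
&\quad + |\P^Y_{\mu,s}(Y_t \in S | \tau_Y > t) - \alpha_{Bm}(S)|.
\end{align*}
The second term is controlled directly by \eqref{exponential-decay}, but the first term, as stated in Proposition \ref{asymptotic-pseudotrajectory}, is only bounded by $F(s)$ with $F(s) \to 0$ as $s \to \infty$ — it does not go to zero for fixed $s$ as $t \to \infty$. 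So the key idea is to exploit the freedom in the time parameter: apply the Markov property at an intermediate time $s' = t/2$.

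Concretely, I would first observe that by the Markov property, for any $s \le t/2 \le t$,
$$
\P_{\mu,s}(X_t \in \cdot \mid \tau_X > t) = \P_{\nu_{t},\, t/2}(X_t \in \cdot \mid \tau_X > t),
$$
where $\nu_t := \P_{\mu,s}(X_{t/2} \in \cdot \mid \tau_X > t)$ is the law of $X_{t/2}$ conditioned on survival up to $t$; this $\nu_t$ is a probability measure on $(-1,1)$. The same identity holds for $Y$ in place of $X$ — but one must be careful, because the conditioning event $\{\tau_X > t\}$ involves survival past $t/2$, so the relevant statement is really that conditioning $X$ started at time $t/2$ from its own conditional law gives back the same conditional distribution at time $t$. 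Then I apply Proposition \ref{asymptotic-pseudotrajectory} with initial time $t/2$ (and final time $T = t$), which yields
$$
\|\P_{\mu,s}(X_t \in \cdot \mid \tau_X > t) - \P^Y_{\nu'_t,\, t/2}(Y_t \in \cdot \mid \tau_Y > t)\|_{TV} \le F(t/2)
$$
for the appropriate choice of intermediate conditional law. Since $F(t/2) \to 0$ as $t \to \infty$, this handles the comparison term.

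For the second term I would use \eqref{exponential-decay} with initial time $t/2$: uniformly in the initial law,
$$
\|\P^Y_{\cdot,\, t/2}(Y_t \in \cdot \mid \tau_Y > t) - \alpha_{Bm}\|_{TV} \le C_{Bm} \exp\!\left(-\gamma_{Bm}\, \frac{(t+1)^{1-2\kappa} - (\tfrac{t}{2}+1)^{1-2\kappa}}{1-2\kappa}\right),
$$
and since $\kappa < 1/2$ the exponent $(t+1)^{1-2\kappa} - (\tfrac t2 + 1)^{1-2\kappa}$ tends to $+\infty$ (it grows like a positive multiple of $t^{1-2\kappa}$), so this bound also vanishes as $t \to \infty$. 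Adding the two estimates gives \eqref{inequality}, and taking $t \to \infty$ gives the weak convergence claim.

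The main obstacle I anticipate is the bookkeeping in the Markov-property step: one has to verify carefully that conditioning $X$ (resp. $Y$) on $\{\tau_X > t\}$ and restarting at time $t/2$ is compatible with applying Proposition \ref{asymptotic-pseudotrajectory}, whose bound $F(s)$ is uniform over initial laws $\mu$, over $t$, and over $T$ — in particular the initial law fed into the proposition at time $t/2$ is itself $t$-dependent, so uniformity over the initial law is exactly what is needed and must be invoked explicitly. Beyond that, the argument is a clean two-term triangle inequality; the only other point to check is the elementary fact that $(t+1)^{1-2\kappa} - (\tfrac t2+1)^{1-2\kappa} \to \infty$, which follows from concavity of $x \mapsto x^{1-2\kappa}$ or a direct estimate.
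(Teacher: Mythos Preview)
Your approach is exactly the paper's: restart at the intermediate time $t/2$, apply Proposition~\ref{asymptotic-pseudotrajectory} from there (using uniformity in the initial law), and then apply \eqref{exponential-decay} for $Y$ on $[t/2,t]$. The one correction you need is in the Markov-property step: the intermediate law must be
\[
\mu_{(s,t/2)} \;:=\; \P_{\mu,s}\bigl(X_{t/2}\in\cdot \,\bigm|\, \tau_X > t/2\bigr),
\]
conditioned on survival only up to $t/2$, \emph{not} your $\nu_t = \P_{\mu,s}(X_{t/2}\in\cdot\mid\tau_X>t)$. With your $\nu_t$ the identity $\P_{\mu,s}(X_t\in\cdot\mid\tau_X>t)=\P_{\nu_t,t/2}(X_t\in\cdot\mid\tau_X>t)$ is false in general, because $\nu_t$ already carries an extra weighting by $\P_{x,t/2}(\tau_X>t)$. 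With $\mu_{(s,t/2)}$ instead, the identity
\[
\P_{\mu,s}(X_t\in\cdot\mid\tau_X>t)\;=\;\P_{\mu_{(s,t/2)},\,t/2}(X_t\in\cdot\mid\tau_X>t)
\]
is an immediate consequence of the Markov property, and then Proposition~\ref{asymptotic-pseudotrajectory} (with starting time $t/2$, $T=t$, initial law $\mu_{(s,t/2)}$) plus \eqref{exponential-decay} (with $s=t/2$) give \eqref{inequality} exactly as you outline. This is precisely the bookkeeping issue you flagged; once it is fixed, your proof coincides with the paper's.
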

\begin{proof}
Let $\mu \in \cM_1((-1,1))$. For any $s \leq t$ define 
\begin{equation*}
\mu_{(s,t)} := \P_{\mu,s}(X_t \in \cdot | \tau_X > t).\end{equation*}
Then, by the Markov property, for any $s \leq t \leq u$,
$$\mu_{(s,u)} = \P_{\mu_{(s,t)},t}(X_u \in \cdot | \tau_X > u).$$
Thus, for any $s \leq t$,
\begin{align*}
||\mu_{(s,2t)} - \alpha_{Bm}||_{TV} &\leq  ||\mu_{(s,2t)} -  \P^Y_{\mu_{(s,t)},t}(Y_{2t} \in \cdot | \tau_Y > 2t)||_{TV} \\&~~~~~~~~~~~~~~ + ||\P^Y_{\mu_{(s,t)},t}(Y_{2t} \in \cdot | \tau_Y > 2t) -  \alpha_{Bm}||_{TV} \\
&= ||\P_{\mu_{(s,t)},t}(X_{2t} \in \cdot | \tau_X > 2t) -  \P^Y_{\mu_{(s,t)},t}(Y_{2t} \in \cdot | \tau_Y > 2t)||_{TV}\\&~~~~~~~~~~~~~~ + ||\P^Y_{\mu_{(s,t)},t}(Y_{2t} \in \cdot | \tau_Y > 2t) -  \alpha_{Bm}||_{TV}\\
&\leq F(t) + C_{Bm} \exp\left(-\gamma_{Bm} \times \frac{(2t+1)^{1-2\kappa}-(t+1)^{1-2\kappa}}{1-2\kappa}\right),
\end{align*}
 where we used the inequalities \eqref{uniform-convergence} and \eqref{exponential-decay}. This shows the inequality \eqref{inequality}. 

Now, since $\lim_{t \to \infty} F(t) = 0$  by Proposition \ref{asymptotic-pseudotrajectory}, and noting that
$$\lim_{t \to \infty} \exp\left(-\gamma_{Bm} \times \frac{(2t+1)^{1-2\kappa}-(t+1)^{1-2\kappa}}{1-2\kappa}\right) = 0,$$
because $\kappa < \frac{1}{2}$,  this shows that, for any $\mu \in \cM_1((-1,1))$ and $s\geq 0$, 
$$\lim_{t \to \infty} \P_{\mu,s}(X_t \in \cdot | \tau_X > t) = \alpha_{Bm}.$$
\end{proof}

\subsection{Quasi-ergodic distribution}
The following theorem states the existence and uniqueness of the quasi-ergodic distribution (in the sense of Definition \ref{qed}) for the process $X$. Moreover, this quasi-ergodic distribution is the probability measure $\beta_{Bm}$ defined in \eqref{quasi-ergodic}.    
\begin{theorem}
For any probability measure $\mu$ on $(-1,1)$ and any $s \geq 0$,
$$\lim_{t \to \infty} \frac{1}{t} \int_0^t \P_{\mu,s}(X_u \in \cdot | \tau_X > t)du = \beta_{Bm},$$
where $\beta_{Bm}$ is defined in \eqref{quasi-ergodic} (Proposition \ref{prop}).
\end{theorem}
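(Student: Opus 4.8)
The plan is to transpose the proof of Theorem~\ref{quasi-ergodic-critical} to the subcritical setting, using the approximation of Proposition~\ref{asymptotic-pseudotrajectory} in place of the exact time-change \eqref{time-change}, together with the quasi-stationary estimates for $Y$ collected in Proposition~\ref{prop}. It is enough to prove the statement for $s=0$, the general case being entirely similar. With the change of variable $u=qt$, for any bounded measurable $f$,
$$\frac{1}{t}\int_0^t \E_{\mu,0}(f(X_u)\mid\tau_X>t)\,du = \int_0^1 \E_{\mu,0}(f(X_{qt})\mid\tau_X>t)\,dq,$$
so by dominated convergence it suffices to show that, for each fixed $q\in(0,1)$, $\P_{\mu,0}(X_{qt}\in\cdot\mid\tau_X>t)$ converges in total variation to $\beta_{Bm}$ as $t\to\infty$; since $\int_0^1\beta_{Bm}\,dq=\beta_{Bm}$, the theorem follows.

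Fix $q\in(0,1)$ and pick an auxiliary $\epsilon\in(0,q)$. By the Markov property (applied at time $\epsilon t$ to both numerator and denominator, exactly as in the derivation at the beginning of the proof of Theorem~\ref{thm}), one has $\P_{\mu,0}(X_{qt}\in\cdot\mid\tau_X>t)=\P_{\mu_{(0,\epsilon t)},\epsilon t}(X_{qt}\in\cdot\mid\tau_X>t)$, where $\mu_{(0,\epsilon t)}:=\P_{\mu,0}(X_{\epsilon t}\in\cdot\mid\tau_X>\epsilon t)$. Proposition~\ref{asymptotic-pseudotrajectory}, applied with $(\epsilon t,qt,t)$ in the roles of $(s,t,T)$ and with initial law $\mu_{(0,\epsilon t)}$, then yields
$$\left\|\P_{\mu,0}(X_{qt}\in\cdot\mid\tau_X>t)-\P^Y_{\mu_{(0,\epsilon t)},\epsilon t}(Y_{qt}\in\cdot\mid\tau_Y>t)\right\|_{TV}\leq F(\epsilon t),$$
which tends to $0$ as $t\to\infty$. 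Hence it remains to prove that $\P^Y_{\nu,\epsilon t}(Y_{qt}\in\cdot\mid\tau_Y>t)$ converges in total variation to $\beta_{Bm}$ as $t\to\infty$, \emph{uniformly} over $\nu\in\cM_1((-1,1))$; the uniformity is needed because $\nu=\mu_{(0,\epsilon t)}$ depends on $t$.

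For this last and central step I would argue as follows. By the Markov property at time $qt$,
$$\P^Y_{\nu,\epsilon t}(Y_{qt}\in A\mid\tau_Y>t)=\frac{\E^Y_{\nu,\epsilon t}\left(\1_{Y_{qt}\in A}\1_{\tau_Y>qt}\,\P^Y_{Y_{qt},qt}(\tau_Y>t)\right)}{\E^Y_{\nu,\epsilon t}\left(\1_{\tau_Y>qt}\,\P^Y_{Y_{qt},qt}(\tau_Y>t)\right)}.$$
By the Dubins--Schwarz identity \eqref{ds}, $\P^Y_{z,qt}(\tau_Y>t)=\P^{\tilde{B}}_z(\tau_{\tilde{B}}>r_t)$ with $r_t:=\frac{(t+1)^{1-2\kappa}-(qt+1)^{1-2\kappa}}{1-2\kappa}\to\infty$; since $e^{\lambda_{Bm}r}\P^{\tilde{B}}_z(\tau_{\tilde{B}}>r)\to\eta_{Bm}(z)$ uniformly on $[-1,1]$, one may write $\P^{\tilde{B}}_z(\tau_{\tilde{B}}>r_t)=e^{-\lambda_{Bm}r_t}\left(\eta_{Bm}(z)+\delta_t(z)\right)$ with $\|\delta_t\|_\infty\to0$, and the prefactor $e^{-\lambda_{Bm}r_t}$ cancels in the ratio. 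The resulting main term $\E^Y_{\nu,\epsilon t}(\1_{Y_{qt}\in A}\1_{\tau_Y>qt}\eta_{Bm}(Y_{qt}))\big/\E^Y_{\nu,\epsilon t}(\1_{\tau_Y>qt}\eta_{Bm}(Y_{qt}))$ is, by the $Q$-process representation of Proposition~\ref{prop}(ii) (with $(\epsilon t,qt)$ in the roles of $(s,t)$; the normalising factors $e^{\lambda_{Bm}(\cdot)}$ and $\eta_{Bm}(x)$ cancel, and the $Q$-process is supported on $(-1,1)$), equal to the $\eta_{Bm}$-weighted average $\int \Q^Y_{x,\epsilon t}(Y_{qt}\in A)\,\eta_{Bm}(x)\,\nu(dx)\big/\int\eta_{Bm}\,d\nu$; by Proposition~\ref{prop}(iii) each $\Q^Y_{x,\epsilon t}(Y_{qt}\in\cdot)$ lies within $C_{Bm}\exp\left(-\gamma_{Bm}\frac{(qt+1)^{1-2\kappa}-(\epsilon t+1)^{1-2\kappa}}{1-2\kappa}\right)$ of $\beta_{Bm}$, uniformly in $x$, and this bound tends to $0$ because $\kappa<\frac12$ and $q>\epsilon$. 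Finally, the error created by the $\delta_t$-terms is bounded, for $t$ large and up to a universal factor, by $\|\delta_t\|_\infty\big/\E^Y_{\nu,\epsilon t}(\eta_{Bm}(Y_{qt})\mid\tau_Y>qt)$, and the denominator converges to $\alpha_{Bm}(\eta_{Bm})=1$ uniformly in $\nu$ by \eqref{exponential-decay}; hence the $\delta_t$-contribution vanishes. This proves the uniform claim, and the theorem follows as explained above.

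The step I expect to be the main obstacle is precisely this uniform control: since $\mu_{(0,\epsilon t)}$ varies with $t$, every estimate must be made uniform in the initial law, which is why the argument is routed through the conditional laws $\P^Y_{\nu,\epsilon t}(\,\cdot\mid\tau_Y>qt)$ and the uniform bounds \eqref{exponential-decay} and Proposition~\ref{prop}(iii) rather than through pointwise ratios — note that $\eta_{Bm}$ vanishes at $\pm1$, so one cannot simply divide by it near the boundary. Conceptually, the feature separating the subcritical from the critical regime is that $\frac{(qt+1)^{1-2\kappa}-(\epsilon t+1)^{1-2\kappa}}{1-2\kappa}\to\infty$ whenever $0<\epsilon<q<1$ (this is where $\kappa<\frac12$ enters): the Brownian clock separating times $\epsilon t$ and $qt$ diverges, so the $q$-Yaglom-type limit collapses onto the $Q$-process stationary law $\beta_{Bm}$ for every $q$ — in contrast with the critical case, where the analogous clock difference $\log(t+1)-\log(qt+1)$ stays bounded and gives rise to the $q$-dependent limit $\E^Z_x(\tau_Z)\,\alpha_{OU}(dx)$.
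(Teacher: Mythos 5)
Your proposal is correct and follows essentially the same route as the paper: reduce to the $q$-slices $\P_{\mu,0}(X_{qt}\in\cdot\mid\tau_X>t)$, recondition at an intermediate time ($\epsilon t$ for you, $\frac{q}{2}t$ in the paper) so that Proposition \ref{asymptotic-pseudotrajectory} contributes an error $F(\epsilon t)\to 0$, and then prove the corresponding limit for $Y$ uniformly in the $t$-dependent initial law using the Dubins--Schwarz clock, the uniform convergence $e^{\lambda_{Bm}r}\P^{\tilde{B}}_{\cdot}(\tau_{\tilde{B}}>r)\to\eta_{Bm}$ and \eqref{exponential-decay}, and finish by dominated convergence in $q$. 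The only divergence is in the bookkeeping of the main term, which you evaluate through the $Q$-process representation and ergodicity of Proposition \ref{prop} (ii)--(iii), whereas the paper computes the same limit directly via its functions $g_t,g_\infty$ and the limits \eqref{prems} and \eqref{deux}; both variants rest on the same ingredients and both deliver the uniformity in the initial law that the $t$-dependence of $\mu_{(0,\epsilon t)}$ requires.
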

\begin{proof}
As for the proof of Theorem \ref{quasi-ergodic-critical}, the following proof will be only done for $s=0$, but the statement holds for a general starting time $s$.

Let $\mu \in \cM_1((-1,1))$. We recall the notation 
$$\mu_{(s,t)} = \P_{\mu,s}(X_t \in \cdot | \tau_X > t),~~~~\forall s \leq t,$$
and, to make the reading simpler, denote
$$\mu_t := \mu_{(0,t)},~~~~\forall t \geq 0.$$
For any probability measure $\mu$ and $t \geq 0$,
\begin{align*}
&\left|\left| \int_0^1  \P_{\mu,0}(X_{qt} \in \cdot |{\tau_X>t})dq - \beta_{Bm} \right|\right|_{TV} \\&~~~~~~~~\leq \left|\left| \int_0^1  \P_{\mu,0}(X_{qt} \in \cdot|{\tau_X>t})dq -  \int_0^1  \P^Y_{\mu_{\frac{q}{2}t},\frac{q}{2}t}(Y_{qt}\in \cdot |{\tau_Y>t})dq \right|\right|_{TV} \\&~~~~~~~~~~~~~~~~~~+  \left|\left| \int_0^1  \P^Y_{\mu_{\frac{q}{2}t},\frac{q}{2}t}(Y_{qt}\in \cdot |{\tau_Y>t})dq - \beta_{Bm} \right|\right|_{TV} \\   
&~~~~~~~~\leq \left|\left| \int_0^1   \P_{\mu_{\frac{q}{2}t},\frac{q}{2}t}(X_{qt}\in \cdot |{\tau_X>t})dq -  \int_0^1  \P^Y_{\mu_{\frac{q}{2}t},\frac{q}{2}t}(Y_{qt}\in \cdot |{\tau_Y>t})dq \right|\right|_{TV} \\&~~~~~~~~~~~~~~~~~~+  \left|\left| \int_0^1  \P^Y_{\mu_{\frac{q}{2}t},\frac{q}{2}t}(Y_{qt}\in \cdot |{\tau_Y>t})dq - \beta_{Bm} \right|\right|_{TV} \\   
&~~~~~~~~\leq \int_0^1 F\left(\frac{q}{2}t\right)dq + \int_0^1 \left|\left| \P^Y_{\mu_{\frac{q}{2}t},\frac{q}{2}t}(Y_{qt}\in \cdot |{\tau_Y>t})dq - \beta_{Bm} \right|\right|_{TV} dq.
\end{align*}
By Lebesgue's dominated convergence theorem,
$$\lim_{t \to \infty} \int_0^1 F\left(\frac{q}{2}t\right)dq = 0.$$
In order to prove the convergence towards the quasi-ergodic distribution, it remains therefore to show that 
\begin{equation}
\label{asympt-prop}
\lim_{t \to \infty} \int_0^1 \left|\left| \P^Y_{\mu_{\frac{q}{2}t},\frac{q}{2}t}(Y_{qt}\in \cdot |{\tau_Y>t})dq - \beta_{Bm} \right|\right|_{TV} dq = 0.\end{equation}
The idea of the following reasoning is the same as in the critical case. Similarly, one has, for any $x \in (-1,1)$, $t \geq 0$, $q \in (0,1)$ and $f$ bounded measurable,
\begin{align*}&e^{\lambda_{Bm} \frac{(t+1)^{1-2\kappa} - (\frac{q}{2}t + 1)^{1-2\kappa}}{1-2\kappa}} \E^Y_{\mu_{\frac{q}{2}t},\frac{q}{2}t}(f(Y_{qt}) \1_{\tau_Y > t}) =  e^{\lambda_{Bm} \frac{(qt+1)^{1-2\kappa} - (\frac{q}{2}t + 1)^{1-2\kappa}}{1-2\kappa}} \E^Y_{\mu_{\frac{q}{2}t}, \frac{q}{2}t}\left(g_t\left(Y_{qt}\right) \1_{\tau_Y >qt}\right),\end{align*}
with, for any $y \in (-1,1)$
$$g_t(y) := e^{\lambda_{Bm} \frac{(t+1)^{1-2\kappa} - ({q}t + 1)^{1-2\kappa}}{1-2\kappa}} f(y) \P^Y_{y,qt}\left[\tau_{Y} > t \right].$$ 
Also define, for any $y \in (-1,1)$,
$$g_\infty(y) := f(y) \eta_{Bm}(y).$$ 
Reminding that 
$$\P^Y_{y,qt}\left(\tau_{Y} > t \right) = \P^{\tilde{B}}_{y}\left[\tau_{\tilde{B}} > \frac{(t+1)^{1-2\kappa} - ({q}t + 1)^{1-2\kappa}}{1-2\kappa} \right],$$
and using Proposition 2.3. in \cite{CV2014} applied to the process $\tilde{B}$, $(g_t)_{t \geq 0}$ converges uniformly on $(-1,1)$ towards $g_\infty$, which implies that
$$\E^Y_{\mu_{\frac{q}{2}t},\frac{q}{2}t}\left[|g_t(Y_{qt}) - g_\infty(Y_{qt})| \middle| \tau_{Y} > qt\right] \underset{t \to + \infty}{\longrightarrow} 0.$$
As a result, if one of the limit in the following equality exists, then the other limit exists also and one has
\begin{align}
\lim_{t \to \infty} e^{\lambda_{Bm} \frac{(t+1)^{1-2\kappa} - (\frac{q}{2}t + 1)^{1-2\kappa}}{1-2\kappa}} &\E^Y_{\mu_{\frac{q}{2}t},\frac{q}{2}t}(f(Y_{qt}) \1_{\tau_Y > t}) \notag \\ & = \lim_{t \to \infty} e^{\lambda_{Bm} \frac{(qt+1)^{1-2\kappa} - (\frac{q}{2}t + 1)^{1-2\kappa}}{1-2\kappa}} \E^Y_{\mu_{\frac{q}{2}t},\frac{q}{2}t}\left(g_\infty\left(Y_{qt}\right) \1_{\tau_Y >qt}\right).
\label{align2}
\end{align}
By the definition of conditional expectation, one has
\begin{align*}
\E^Y_{\mu_{\frac{q}{2}t},\frac{q}{2}t}\left(g_\infty\left(Y_{qt}\right) \1_{\tau_Y >qt}\right) & =  \E^Y_{\mu_{\frac{q}{2}t},\frac{q}{2}t}\left(g_\infty\left(Y_{qt}\right) \middle| {\tau_Y >qt}\right)  \P^Y_{\mu_{\frac{q}{2}t},\frac{q}{2}t}( {\tau_Y >qt)}.
\end{align*}
On the one hand, by \eqref{exponential-decay},
\begin{equation}
\label{prems}
\lim_{t \to \infty} \E^Y_{\mu_{\frac{q}{2}t},\frac{q}{2}t}\left(g_\infty\left(Y_{qt}\right) \middle| {\tau_Y >qt}\right) =\alpha_{Bm}(g_\infty).\end{equation}
On the other hand, using again Proposition 2.3. in \cite{CV2014} applied to the process $\tilde{B}$, we deduce that
$$\left| e^{\lambda_{Bm} \frac{(qt+1)^{1-2\kappa} - (\frac{q}{2}t + 1)^{1-2\kappa}}{1-2\kappa}} \P^Y_{\mu_{\frac{q}{2}t}, \frac{q}{2}t}( {\tau_Y >qt}) - \mu_{\frac{q}{2}t}(\eta_{Bm}) \right| \underset{t \to + \infty}{\longrightarrow} 0,$$
and, again by \eqref{exponential-decay},
$$\lim_{t \to \infty}\mu_{\frac{q}{2}t}(\eta_{Bm}) =\alpha_{Bm}(\eta_{Bm}) = 1.$$
As a result,
\begin{equation}
\label{deux}\lim_{t \to \infty} e^{\lambda_{Bm} \frac{(qt+1)^{1-2\kappa} - (\frac{q}{2}t + 1)^{1-2\kappa}}{1-2\kappa}} \P^Y_{\mu_{\frac{q}{2}t},\frac{q}{2}t}( {\tau_Y >qt})= 1.\end{equation}
Hence, we deduce from \eqref{align2},\eqref{prems} and \eqref{deux} that, for any bounded measurable function $f$, 
\begin{align*}
\lim_{t \to \infty} e^{\lambda_{Bm} \frac{(t+1)^{1-2\kappa} - (\frac{q}{2}t + 1)^{1-2\kappa}}{1-2\kappa}} &\E^Y_{\mu_{\frac{q}{2}t},\frac{q}{2}t}(f(Y_{qt}) \1_{\tau_Y > t})\\ &= 
\lim_{t \to \infty} e^{\lambda_{Bm} \frac{(qt+1)^{1-2\kappa} - (\frac{q}{2}t + 1)^{1-2\kappa}}{1-2\kappa}} \E^Y_{\mu_{\frac{q}{2}t},\frac{q}{2}t}\left(g_\infty\left(Y_{qt}\right) \1_{\tau_Y >qt}\right) \\
&= \alpha_{Bm}(g_\infty) = \int_{(-1,1)} \alpha_{Bm}(dx) f(x) \eta_{Bm}(x) = \beta_{Bm}(f).
\end{align*}
Taking $f=\1$,
$$\lim_{t \to \infty} e^{\lambda_{Bm} \frac{(t+1)^{1-2\kappa} - (\frac{q}{2}t + 1)^{1-2\kappa}}{1-2\kappa}} \P^Y_{\mu_{\frac{q}{2}t}, \frac{q}{2}t}({\tau_Y > t}) = 1.$$
Thus, for any bounded measurable function $f$,
$$\lim_{t \to \infty} \E^Y_{\mu_{\frac{q}{2}t},\frac{q}{2}t}\left(f(Y_{qt})\middle|{\tau_Y > t}\right)= \beta_{Bm}(f).$$
We conclude to \eqref{asympt-prop} by Lebesgue's theorem. 
\end{proof}

\subsection{$Q$-process}
\subsubsection{Existence of the $Q$-process}
Now, it remains to prove the existence of the $Q$-process. More precisely,
this subsection is devoted to the proof of the following theorem:
\begin{theorem}
\label{q-proc}
For any $s \leq t$ and $\mu \in \cM_1((-1,1))$, the family of probability measure $$(\P_{\mu,s}(X_{[s,t]} \in \cdot | T < \tau_X))_{T > t}$$ converges weakly when $T$ goes to infinity towards 
\begin{equation}
\label{form-qproc}
\Q_{\mu,s}(X_{[s,t]} \in \cdot) = \E_{\mu,s}\left(\1_{X_{[s,t]} \in \cdot, \tau_X > t} \frac{\eta_t(X_t)}{\mu(\eta_s)}\right),
\end{equation}
where $(\eta_t)_{t \geq 0}$ is defined in Proposition \ref{mean-ratio}.  
Moreover, for any $s \leq t$ and $\mu \in \cM_1((-1,1))$, one has
\begin{equation}
\label{aptqproc}
||\Q_{\mu,s}(X_t \in \cdot) - \Q_{\mu,s}^Y(Y_t \in \cdot)||_{TV} \leq F(s),\end{equation}
where $F$ is the same function as in Proposition \ref{asymptotic-pseudotrajectory} and $\Q^Y$ is as defined in Proposition \ref{prop}.
\end{theorem}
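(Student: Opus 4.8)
The plan is to read off both assertions from the Markov property, the renormalized survival asymptotics of Proposition~\ref{mean-ratio}, and the comparison estimate~\eqref{uniform-convergence} of Proposition~\ref{asymptotic-pseudotrajectory}. Fix $\mu\in\cM_1((-1,1))$ and $s\le t$, and recall from Proposition~\ref{mean-ratio} that there is a normalization $c_t(T)$ (with $t$ fixed and $T\to\infty$) such that $c_t(T)\,\P_{x,t}(\tau_X>T)\longrightarrow\eta_t(x)$ uniformly in $x\in[-1,1]$, that $\eta_t$ is bounded, positive on $(-1,1)$ and zero at $\pm1$, and that the family $(\eta_u)_{u\ge0}$ is space-time harmonic for the killed process, i.e.\ $\eta_s(x)=\E_{x,s}\big(\1_{\tau_X>t}\,\eta_t(X_t)\big)$ for $s\le t$.

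First I would rewrite the conditioned trajectory law as a ratio. For $T>t$ and any bounded measurable functional $\Phi$ of the path $X_{[s,t]}$, the Markov property applied at time $t$ gives $\E_{\mu,s}\big(\Phi(X_{[s,t]})\,\1_{T<\tau_X}\big)=\E_{\mu,s}\big(\Phi(X_{[s,t]})\,\1_{\tau_X>t}\,\P_{X_t,t}(\tau_X>T)\big)$, and with $\Phi\equiv1$ this is $\P_{\mu,s}(\tau_X>T)=\E_{\mu,s}\big(\1_{\tau_X>t}\,\P_{X_t,t}(\tau_X>T)\big)$; hence
\begin{equation*}
\P_{\mu,s}\big(X_{[s,t]}\in\cdot \mid T<\tau_X\big)(\Phi)=\frac{\E_{\mu,s}\big(\Phi(X_{[s,t]})\,\1_{\tau_X>t}\,c_t(T)\,\P_{X_t,t}(\tau_X>T)\big)}{\E_{\mu,s}\big(\1_{\tau_X>t}\,c_t(T)\,\P_{X_t,t}(\tau_X>T)\big)}.
\end{equation*}
Then I would let $T\to\infty$: by the uniform convergence above, $c_t(T)\,\P_{X_t,t}(\tau_X>T)\to\eta_t(X_t)$ on $\{\tau_X>t\}$ and the integrands stay uniformly bounded, so the limit passes through both expectations. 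The numerator tends to $\E_{\mu,s}\big(\Phi(X_{[s,t]})\,\1_{\tau_X>t}\,\eta_t(X_t)\big)$ and the denominator to $\E_{\mu,s}\big(\1_{\tau_X>t}\,\eta_t(X_t)\big)=\mu(\eta_s)$, the last equality by the harmonicity relation, with $\mu(\eta_s)>0$ since $\eta_s>0$ on $(-1,1)$ and $\mu$ is supported there. This yields, for every bounded measurable $\Phi$,
\begin{equation*}
\lim_{T\to\infty}\P_{\mu,s}\big(X_{[s,t]}\in\cdot \mid T<\tau_X\big)(\Phi)=\frac{1}{\mu(\eta_s)}\,\E_{\mu,s}\big(\Phi(X_{[s,t]})\,\1_{\tau_X>t}\,\eta_t(X_t)\big),
\end{equation*}
which is stronger than the asserted weak convergence and identifies the limit with the expression~\eqref{form-qproc}.

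For~\eqref{aptqproc} I would specialize to functionals of the endpoint, so that $\P_{\mu,s}(X_t\in\cdot \mid T<\tau_X)$ converges weakly to $\Q_{\mu,s}(X_t\in\cdot)$ as $T\to\infty$; the same reasoning applied to $Y$, using Proposition~\ref{prop} in place of Proposition~\ref{mean-ratio}, gives $\P^Y_{\mu,s}(Y_t\in\cdot \mid T<\tau_Y)\to\Q^Y_{\mu,s}(Y_t\in\cdot)$. Since $\P_{\mu,s}(X_t\in\cdot \mid T<\tau_X)=\P_{\mu,s}(X_t\in\cdot \mid \tau_X>T)$, the estimate~\eqref{uniform-convergence} gives $\big\|\P_{\mu,s}(X_t\in\cdot \mid T<\tau_X)-\P^Y_{\mu,s}(Y_t\in\cdot \mid T<\tau_Y)\big\|_{TV}\le F(s)$ for every $T>t$. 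Testing against a continuous $f$ with $\|f\|_\infty\le1$ and letting $T\to\infty$ gives $|\Q_{\mu,s}(f(X_t))-\Q^Y_{\mu,s}(f(Y_t))|\le F(s)$; since both limiting laws are absolutely continuous with respect to Lebesgue on $(-1,1)$ — being $\eta_t(x)$-, resp.\ $\eta_{Bm}(x)$-biased versions of the (smooth) densities of the killed marginals — the supremum over such $f$ equals the total variation distance, and~\eqref{aptqproc} follows.

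The substance lies upstream, in Proposition~\ref{mean-ratio}: one must construct the renormalized limit $\eta_t$ for the genuinely time-inhomogeneous process $X$ and, above all, establish its uniformity in the starting position; I expect this to be the main obstacle, bootstrapped from the exact asymptotics for $Y$ (Proposition~\ref{prop}) together with the Girsanov identity~\eqref{rev} and the two-sided bound~\eqref{cpasmal}. Granting that, the only delicate point in the argument above is that the normalizations $c_t(T)$ must cancel \emph{exactly} between numerator and denominator in the limit; this is why the denominator is handled through the harmonicity relation for $(\eta_u)_u$ rather than by computing two renormalized limits separately, so that the constant in~\eqref{form-qproc} comes out to be precisely $\mu(\eta_s)$.
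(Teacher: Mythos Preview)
Your proof is correct and follows the same route as the paper: apply the Markov property at time $t$, then pass to the limit via the uniform survival asymptotics of Proposition~\ref{mean-ratio} (which you recast as $c_t(T)\,\P_{x,t}(\tau_X>T)\to\eta_t(x)$ uniformly in $x$ --- an equivalent restatement obtained by taking $\nu=\delta_0$ in~\eqref{unif-bound}), and identify the denominator through the harmonicity relation~\eqref{eigenfunction}; the paper does the same thing but phrases it as a direct total variation bound by $H(\mu_{(s,t)},t,T)$. For~\eqref{aptqproc} the paper simply lets $T\to\infty$ in~\eqref{uniform-convergence}, which is legitimate because both conditioned laws converge in total variation (this is what the first part actually gives, and Proposition~\ref{prop} gives the analogous statement for $Y$), so your density argument, while correct, is not needed.
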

Before proving this theorem, let us first state the following key proposition. 
\begin{proposition}
\label{mean-ratio}
There exist a family of positive bounded functions $(\eta_s)_{s \geq 0}$ satisfying
\begin{equation}
\label{eigenfunction}
\E_{x,s}(\1_{\tau_X > t} \eta_t(X_t)) = \eta_s(x),~~~~\forall x \in (-1,1), \forall s \leq t,\end{equation}
and $H : \cM_1((-1,1)) \times \{s,t \in \R_+ : s \leq t \} \to (0,\infty)$ such that, for any $\nu \in \cM_1((-1,1))$ and $s \geq 0$, $$\lim_{t \to \infty} H(\nu,s,t) = 0,$$ and that, for any $s \leq t$ and for any $\mu, \nu \in \cM_1((-1,1))$, 
\begin{equation}
\label{unif-bound}\left|\frac{\P_{\mu,s}(\tau_X > t)}{\P_{\nu,s}(\tau_X > t)} - \frac{\mu(\eta_s)}{\nu(\eta_s)} \right| \leq H(\nu,s,t).\end{equation}
\end{proposition}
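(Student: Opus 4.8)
The plan is to produce $\eta_s$ as a renormalized limit of survival probabilities and to read off every required property from the comparison with $Y$. A preliminary remark makes the choice of renormalization flexible: let $Z:\R_+\to(0,\infty)$ depend only on the terminal time, and suppose that for every $s\ge 0$ the limit $\eta_s(x):=\lim_{T\to\infty}Z(T)^{-1}\P_{x,s}(\tau_X>T)$ exists in $(0,\infty)$, is bounded in $x$, and that the convergence is uniform in $x$ with the prelimit functions uniformly bounded on $[-1,1]$. Then \eqref{eigenfunction} is automatic: by the Markov property at time $t\le T$ one has $\E_{x,s}(\1_{\tau_X>t}\P_{X_t,t}(\tau_X>T))=\P_{x,s}(\tau_X>T)$, so dividing by $Z(T)$, letting $T\to\infty$, and exchanging the limit with $\E_{x,s}$ by dominated convergence gives $\E_{x,s}(\1_{\tau_X>t}\eta_t(X_t))=\eta_s(x)$. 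Positivity of $\eta_s$ on $(-1,1)$ and its vanishing at $\pm1$ will fall out of the explicit form of the limit. Hence everything reduces to (a) establishing this renormalized limit for a good choice of $Z$, and (b) the bound \eqref{unif-bound}.

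First I would rewrite the survival probability via Girsanov. With the notation of the proof of Proposition \ref{asymptotic-pseudotrajectory}, recall $\mathcal{E}(M)_{s,T}=\big(\tfrac{s+1}{T+1}\big)^{\kappa/2}\exp(\tfrac12 N'_{s,T})$, where $N'_{s,T}$ is the explicit, stochastic–integral–free functional of $(X_u)_{u\in[s,T]}$ written down there. Since under $\G_{x,s}$ the stopped path $(X_{u\wedge\tau_X})_{u\ge s}$ has the law of $(Y_{u\wedge\tau_Y})_{u\ge s}$ under $\P^Y_{x,s}$, and since $\{\tau_X>T\}$ is $\sigma(X_{u\wedge\tau_X}:u\le T)$–measurable, applying this change of measure to the functional $\1_{\tau_X>T}/\mathcal{E}(M)_{s,T}$ yields
$$\P_{x,s}(\tau_X>T)=\Big(\tfrac{T+1}{s+1}\Big)^{\kappa/2}\E^Y_{x,s}\!\big(\exp(-\tfrac12 N'_{s,T}(Y))\,\1_{\tau_Y>T}\big),$$
where $N'_{s,T}(Y)$ is the same functional evaluated along the $Y$–path. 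Factoring the right–hand side as $\big(\tfrac{T+1}{s+1}\big)^{\kappa/2}\P^Y_{x,s}(\tau_Y>T)\,\E^Y_{x,s}\!\big(\exp(-\tfrac12 N'_{s,T}(Y))\,\mid\,\tau_Y>T\big)$ and invoking Proposition \ref{prop}(i), which gives $e^{\lambda_{Bm}\theta(T)}\P^Y_{x,s}(\tau_Y>T)\to e^{\lambda_{Bm}\theta(s)}\eta_{Bm}(x)$ uniformly in $x$ with $\theta(u):=\tfrac{(u+1)^{1-2\kappa}-1}{1-2\kappa}$, suggests the choice $Z(T):=(T+1)^{\kappa/2}e^{-\lambda_{Bm}\theta(T)}$.

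It then remains to show that $\E^Y_{x,s}\!\big(\exp(-\tfrac12 N'_{s,T}(Y))\,\mid\,\tau_Y>T\big)$ converges as $T\to\infty$, uniformly in $x$. Here I would truncate: on $\{\tau_Y>T\}$ one has $|Y_u|<1$ for $u\le T$, and since $2\kappa-2<-1$ the tail increments satisfy $|N'_{s,T}(Y)-N'_{s,t_0}(Y)|\le\omega(t_0)$ for a deterministic $\omega(t_0)\to0$, independent of $T\ge t_0$ and of the path on $\{\tau_Y>T\}$; thus $\exp(-\tfrac12 N'_{s,T}(Y))$ is uniformly $\omega(t_0)$–close to the bounded continuous functional $\exp(-\tfrac12 N'_{s,t_0}(Y))$ of $(Y_u)_{u\in[s,t_0]}$. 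For fixed $t_0$, Proposition \ref{prop}(ii)—the convergence of $\P^Y_{x,s}(Y_{[s,t_0]}\in\cdot\mid T<\tau_Y)$ to $\Q^Y_{x,s}$, which is uniform in $x$ by the quantitative Champagnat--Villemonais estimates underlying Proposition \ref{prop}—lets me send $T\to\infty$, and then letting $t_0\to\infty$, using that under $\Q^Y_{x,s}$ the process never dies so $N'_{s,t_0}(Y)\to N'_{s,\infty}(Y)$ (the tail integrals being absolutely convergent), gives $\lim_{T\to\infty}\E^Y_{x,s}\!\big(\exp(-\tfrac12 N'_{s,T}(Y))\mid\tau_Y>T\big)=\E_{\Q^Y_{x,s}}\!\big(\exp(-\tfrac12 N'_{s,\infty}(Y))\big)=:\xi_s(x)$, uniformly in $x$. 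Bounding $N'_{s,\infty}(Y)$ above and below on such paths by the quantities that define $\phi(s)$ in the proof of Proposition \ref{asymptotic-pseudotrajectory} shows $\xi_s(x)\in[1-\phi(s),1+\phi(s)]$; in particular $\xi_s$ is positive and bounded. Consequently $Z(T)^{-1}\P_{x,s}(\tau_X>T)\to\eta_s(x):=(s+1)^{-\kappa/2}e^{\lambda_{Bm}\theta(s)}\eta_{Bm}(x)\,\xi_s(x)$ uniformly in $x$; this $\eta_s$ is bounded, positive on $(-1,1)$ and vanishes at $\pm1$ through $\eta_{Bm}$, and by the first paragraph it satisfies \eqref{eigenfunction}.

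For \eqref{unif-bound}, uniform convergence of $Z(t)^{-1}\P_{\cdot,s}(\tau_X>t)$ to $\eta_s$ gives, for every $\mu\in\cM_1((-1,1))$, $|Z(t)^{-1}\P_{\mu,s}(\tau_X>t)-\mu(\eta_s)|\le\varepsilon(s,t):=\|Z(t)^{-1}\P_{\cdot,s}(\tau_X>t)-\eta_s\|_\infty$, with $\varepsilon(s,t)\to 0$, uniformly in $\mu$. Since $Z(t)^{-1}\P_{\nu,s}(\tau_X>t)\to\nu(\eta_s)>0$, the elementary estimate for $|a/b-a_0/b_0|$ with $|a-a_0|,|b-b_0|\le\varepsilon(s,t)$ and $a_0,b_0\le\|\eta_s\|_\infty$ yields $\big|\tfrac{\P_{\mu,s}(\tau_X>t)}{\P_{\nu,s}(\tau_X>t)}-\tfrac{\mu(\eta_s)}{\nu(\eta_s)}\big|\le\tfrac{2\|\eta_s\|_\infty\,\varepsilon(s,t)}{(Z(t)^{-1}\P_{\nu,s}(\tau_X>t))\,\nu(\eta_s)}+e^{-t}=:H(\nu,s,t)$, which is positive, finite for each $(\nu,s,t)$, independent of $\mu$, and tends to $0$ as $t\to\infty$. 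The main obstacle is precisely the step producing $\xi_s$: converting the $T\to\infty$ limit of the conditional expectation of the path functional $\exp(-\tfrac12 N'_{s,T}(Y))$ into an expectation under the $Q$–process of $Y$, \emph{uniformly in the initial point $x$}. The rest is bookkeeping with the Girsanov identity and direct appeals to Propositions \ref{asymptotic-pseudotrajectory} and \ref{prop}; what must be extracted with care is that uniformity, i.e.\ a quantitative, $x$–independent rate of convergence of $\P^Y_{x,s}(Y_{[s,t_0]}\in\cdot\mid T<\tau_Y)$ to $\Q^Y_{x,s}$, coming from the Champagnat--Villemonais estimates behind Proposition \ref{prop}.
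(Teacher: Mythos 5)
Your plan is essentially correct, but it follows a genuinely different route from the paper. The paper never establishes an asymptotic for $\P_{x,s}(\tau_X>t)$ itself with a deterministic normalization; it works only with the ratios $\P_{\mu,s}(\tau_X>t)/\P_{\nu,s}(\tau_X>t)$, showing they form a Cauchy family in $t$ by combining the Markov property, the Harnack-type bounds of Lemma \ref{harnack}, the ratio comparison between $X$ and $Y$ of Lemma \ref{apt}, and the quasi-limiting convergence of Theorem \ref{thm} (which supplies the tightness giving the sets $[-a_s(\nu),a_s(\nu)]$); it then defines $\eta_s(x)$ as the limit of $\P_{x,s}(\tau_X>t)/\P_{0,s}(\tau_X>t)$ and must repair the normalization through the multiplicative constants $c_{s,t}$ and their cocycle identity $c_{s,t}c_{t,u}=c_{s,u}$ before \eqref{eigenfunction} holds. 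You instead invert the Girsanov identity \eqref{rev} to write $\P_{x,s}(\tau_X>T)$ as a $Y$-expectation of $\exp(-\tfrac12 N'_{s,T})\1_{\tau_Y>T}$ and prove a sharper statement: uniform convergence of $Z(T)^{-1}\P_{x,s}(\tau_X>T)$ with $Z$ depending only on the terminal time, which makes \eqref{eigenfunction} automatic (no cocycle correction needed) and yields \eqref{unif-bound} with an $H$ independent of $\mu$; as a by-product you upgrade the $O(\cdot)$ estimate in the remark after Lemma \ref{apt} to an exact equivalent. The price is the step you yourself flag: identifying $\lim_{T\to\infty}\E^Y_{x,s}\bigl(\exp(-\tfrac12 N'_{s,T}(Y))\mid\tau_Y>T\bigr)$ as a $Q$-process expectation \emph{uniformly in $x$} is not literally contained in Proposition \ref{prop}(ii), which only asserts existence of $\Q^Y_{x,s}$. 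It can, however, be supplied: condition at the intermediate time $t_0$, renormalize numerator and denominator by $e^{\lambda_{Bm}(\theta(T)-\theta(t_0))}$ and use the uniform convergence in Proposition \ref{prop}(i); the resulting error is of order $\sup_y\bigl|e^{\lambda_{Bm}(\theta(T)-\theta(t_0))}\P^Y_{y,t_0}(\tau_Y>T)-\eta_{Bm}(y)\bigr|$ divided by $\E^Y_{x,s}\bigl(\eta_{Bm}(Y_{t_0})\mid\tau_Y>t_0\bigr)$, and the latter is bounded below uniformly in $x$ once $\theta(t_0)-\theta(s)$ exceeds the Doeblin time of condition $(A1)$ for the Brownian motion (equivalently, one can invoke the quantitative form of the $Q$-process convergence in \cite{CV2014}, transferred to $Y$ by the deterministic time change). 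Since you send $t_0\to\infty$ anyway, this is harmless, and your truncation bound $|N'_{s,T}-N'_{s,t_0}|\le\omega(t_0)$ on surviving paths (valid because $2\kappa-2<-1$ and $2\kappa-1<0$) correctly uniformizes the remaining error. With that step spelled out, your argument is a valid, and in fact more informative, alternative to the paper's proof.
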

The proof of this proposition is postponed after the proof of Theorem \ref{q-proc}.
\begin{proof}[Proof of Theorem \ref{q-proc}]
Let $\mu \in \cM_1((-1,1))$ and $s \leq t$. We define $\Q_{\mu,s}(X_{[s,t]} \in \cdot)$ as in the formula \eqref{form-qproc}. Then, for any $T > t$,
\begin{align*}
||\P_{\mu,s}(X_{[s,t]} \in \cdot | &\tau_X > T) - \Q_{\mu,s}(X_{[s,t]} \in \cdot)||_{TV}\\ &= \left|\left| \E_{\mu,s}\left[ \1_{X_{[s,t]} \in \cdot, \tau_X > t} \left(\frac{\P_{X_t,t}(\tau_X > T)}{\P_{\mu,s}(\tau_X > T)} - \frac{\eta_t(X_t)}{\mu(\eta_s)}\right)\right]\right|\right|_{TV} \\
 &= \left|\left| \E_{\mu,s}\left[ \frac{\1_{X_{[s,t]} \in \cdot, \tau_X > t}}{\P_{\mu,s}(\tau_X > t)} \left(\frac{\P_{X_t,t}(\tau_X > T)}{\P_{\mu_{(s,t)},t}(\tau_X > T)} - \frac{\eta_t(X_t)}{\mu_{(s,t)}(\eta_t)}\right)\right]\right|\right|_{TV},
\end{align*}
where \eqref{eigenfunction} was used. Hence, by \eqref{unif-bound} in Proposition \ref{mean-ratio},
\begin{align*}||\P_{\mu,s}(X_{[s,t]} \in \cdot | \tau_X > T) - \Q_{\mu,s}(X_{[s,t]} \in \cdot)||_{TV} &\leq H(\mu_{(s,t)},t,T) \left|\left| \E_{\mu,s}\left[ \frac{\1_{X_{[s,t]} \in \cdot, \tau_X > t}}{\P_{\mu,s}(\tau_X > t)}\right]\right|\right|_{TV} \\
&\leq H(\mu_{(s,t)},t,T).
\end{align*} 
Since, for $s \leq t$ fixed, $\lim_{T \to \infty} H(\mu_{(s,t)},t,T) = 0$, this implies the weak convergence of the family  $(\P_{\mu,s}(X_{[s,t]} \in \cdot | T < \tau_X))_{T \geq t}$ towards $\Q_{\mu,s}$, when $T$ goes to infinity. Now, by \eqref{asymptotic-pseudotrajectory}, for any $s \leq t \leq T$ and $\mu \in \cM_1(E_s)$,
$$||\P_{\mu,s}(X_t \in \cdot | \tau_X > T) - \P^Y_{\mu,s}(Y_t \in \cdot | \tau_Y > T)||_{TV} \leq F(s).$$
Therefore, letting $T \to \infty$, one obtains : $\forall s \leq t$ and $\mu \in \cM_1(E_s)$,
$$||\Q_{\mu,s}(X_t \in \cdot) - \Q_{\mu,s}^Y(Y_t \in \cdot)||_{TV} \leq F(s).$$
\end{proof}

\subsubsection{Proof of Proposition \ref{mean-ratio}}
The remaining of the paper is dedicated to prove Proposition \ref{mean-ratio}. In the proof, two important lemmata are used. So we will start by proving these lemmata before tackling the proof of Proposition \ref{mean-ratio}.  

\begin{lemma}
\label{harnack}
\begin{enumerate}[a)]
\item For any $s \geq 0$ and $a \in (0,1)$, there exists $C_{s,a} > 0$ such that 
$$\inf_{x \in [-a,a]} \P_{x,s}(\tau_X > t) \geq C_{s,a} \sup_{x \in (-1,1)} \P_{x,s}(\tau_X > t),~~~~\forall t \geq 0.$$
\item For any $a \in (0,1)$, there exists $C_a > 0$ such that
$$\inf_{x \in [-a,a]} \P^Y_{x,s}(\tau_Y > t) \geq C_a \sup_{x \in (-1,1)} \P^Y_{x,s}(\tau_Y > t),~~~~\forall s \leq t.$$
\end{enumerate}
\end{lemma}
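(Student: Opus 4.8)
The plan is to prove both statements in the same way, so I focus on part a); part b) is the special case $\kappa=0$ (a pure Brownian motion absorbed at $\{-1,1\}$ with a deterministic time change via Dubins–Schwartz, as in \eqref{ds}), where the constant becomes independent of $s$ because the underlying process is time-homogeneous. For part a), fix $s\geq 0$ and $a\in(0,1)$. The idea is a classical Harnack-type comparison: from any starting point $x\in[-a,a]$ the process $X$ has, over a bounded time window, a uniformly positive probability of being coupled to (or of reaching a neighbourhood of) any other starting point $y\in(-1,1)$ while staying inside $(-1,1)$, and then one applies the Markov property at the end of that window.

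Concretely, first I would choose a fixed time horizon $h>0$ and a fixed radius $b\in(a,1)$, and show there is a constant $c_{s,a}>0$ such that for every $x\in[-a,a]$ and every $y\in[-b,b]$,
$$\P_{x,s}\big(\tau_X > s+h,\ X_{s+h}\in dy\big)\ \geq\ c_{s,a}\, \P_{y,s}\big(X_s\in dy\big)\ \text{ in an appropriate sense},$$
i.e. the sub-probability transition kernel of $X$ from time $s$ to time $s+h$, killed outside $(-1,1)$, has a density on $[-b,b]^2$ that is bounded below by a positive constant depending only on $s,a,h,b$. This follows because $X$ on $[s,s+h]$ is, up to the explicit deterministic rescaling $X_t=B_t/(t+1)^\kappa$, a nondegenerate diffusion with smooth coefficients on the relevant region, so its killed transition density is continuous and strictly positive on compact subsets of $(-1,1)^2$ (one may also argue directly from the Gaussian law of $B$ together with the reflection principle / a barrier estimate to keep $|B_u|<(u+1)^\kappa$ on $[s,s+h]$). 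Then, using the Markov property at time $s+h$,
$$\P_{x,s}(\tau_X>t)\ \geq\ \int_{[-b,b]}\P_{x,s}\big(\tau_X>s+h,\ X_{s+h}\in dy\big)\,\P_{y,s+h}(\tau_X>t)\ \geq\ c_{s,a}\int_{[-b,b]}\P_{y,s+h}(\tau_X>t)\,dy,$$
valid for all $t\geq s+h$.

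To finish, I must bound the right-hand side below by $\sup_{z\in(-1,1)}\P_{z,s}(\tau_X>t)$ up to a constant. For this I would use the same kernel lower bound once more, but now started from an arbitrary $z\in(-1,1)$: by the strong Markov property at the first hitting time of $[-b,b]$ (or simply by running one more time-step of length $h$ and noting the process is in $[-b,b]$ with positive probability, uniformly — here one needs $b$ close enough to $1$ and may enlarge $h$), $\P_{z,s}(\tau_X>t)$ is comparable, uniformly in $z$, to $\int_{[-b,b]}\P_{y,s+h}(\tau_X>t)\,dy$ times a constant; combining gives $\P_{x,s}(\tau_X>t)\geq C_{s,a}\sup_{z}\P_{z,s}(\tau_X>t)$ for $t$ large, and for $t$ in the bounded range $[s,s+2h]$ the inequality is trivial after possibly shrinking $C_{s,a}$, since $\inf_{x\in[-a,a]}\P_{x,s}(\tau_X>t)$ is then bounded below by a positive constant while the supremum is at most $1$. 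For part b), the same scheme applies verbatim to $Y$ via \eqref{ds}: $\P^Y_{x,s}(\tau_Y>t)=\P^{\tilde B}_x(\tau_{\tilde B}>\ell(s,t))$ with $\ell(s,t)=\frac{(t+1)^{1-2\kappa}-(s+1)^{1-2\kappa}}{1-2\kappa}$, and the corresponding Harnack inequality for killed Brownian motion holds with a constant depending only on $a$ (not on $s$), because the killed Brownian transition density is translation-invariant in time.

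The main obstacle is the last step: producing the \emph{uniform-in-$z$} comparison of $\sup_{z\in(-1,1)}\P_{\cdot,s}(\tau_X>t)$ with the survival probability restricted to the compact core $[-b,b]$. Near the boundary the survival probability degenerates, so one cannot simply lower-bound the one-step kernel from $z$ close to $\pm 1$; the fix is to observe that the supremum over $z$ is anyway controlled by the survival probability from the \emph{center}, e.g.\ by $\P_{0,s+h}(\tau_X>t)$ up to a constant (a point in $[-b,b]$), which is itself dominated by $\int_{[-b,b]}\P_{y,s+h}(\tau_X>t)\,dy$ up to a constant via the lower bound on the killed density on $[-b,b]^2$. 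Making this chain of comparisons precise — with careful bookkeeping of which constants depend on $s$ — is the technical heart of the argument; everything else is the standard continuity/positivity of a nondegenerate killed diffusion density on compacts.
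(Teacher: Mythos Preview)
Your density-lower-bound strategy is a standard and valid route to Harnack inequalities for killed diffusions, but the proposal as written has a real gap precisely at the step you flag as the ``technical heart.'' You need $\sup_{z\in(-1,1)}\P_{z,s}(\tau_X>t)\le C\int_{[-b,b]}\P_{y,s+h}(\tau_X>t)\,dy$, and your proposed fix (``the supremum is controlled by the center, which is dominated by the integral via the density lower bound on $[-b,b]^2$'') does not close: to bound $\P_{0,s+h}(\tau_X>t)$ by the integral $\int_{[-b,b]}\P_{y,s+h}(\tau_X>t)\,dy$ you would need $\inf_{y\in[-b,b]}\P_{y,s+h}(\tau_X>t)\ge c\,\P_{0,s+h}(\tau_X>t)$, which is exactly the Harnack inequality you are trying to prove (at a shifted time). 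An additional fixed-time step only pushes the problem forward.

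The paper avoids all of this by exploiting the specific structure of the process. Writing $\P_{x,s}(\tau_X>t)=\P^B_{(s+1)^\kappa x}(\tau_B^{(\cdot+s+1)^\kappa}>t-s)$, one uses two elementary monotonicity facts: (i) for Brownian motion absorbed at a symmetric boundary, the survival probability is a decreasing function of $|x|$, so the supremum over $(-1,1)$ is attained at $0$ and the infimum over $[-a,a]$ at $\pm a$; (ii) the boundary $u\mapsto(u+s+1)^\kappa$ is increasing, so shifting the starting time forward only widens the tube. With these in hand, a single application of the strong Markov property at the first hitting time of $0$ gives
\[
\P_{a(s+1)^\kappa}^B\big(\tau_B^{(\cdot+s+1)^\kappa}>t\big)\ \ge\ \P_{a(s+1)^\kappa}^B\big(\tau_B^0<\tau_B^{(\cdot+s+1)^\kappa}\big)\cdot \P_{0}^B\big(\tau_B^{(\cdot+s+1)^\kappa}>t\big),
\]
and $C_{s,a}:=\P_{a(s+1)^\kappa}^B(\tau_B^0<\tau_B^{(\cdot+s+1)^\kappa})>0$ is the desired constant. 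No transition-density bounds, no compact-core integrals, no bookkeeping over two time windows.

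Your part b) is fine and agrees with the paper: via \eqref{ds} the problem reduces to the time-homogeneous Harnack inequality for killed Brownian motion, and the constant is independent of $s$. If you want to repair part a) along your lines, the cleanest fix is simply to insert observation (i) above: once you know the supremum equals $\P_{0,s}(\tau_X>t)$, you can compare directly with $\P_{a,s}(\tau_X>t)$ via the hitting-time-of-$0$ argument and drop the density machinery entirely.
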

\begin{proof}
\begin{enumerate}[a)]
\item Let $a > 0$. To prove this, note that, for any $x \in (-1,1)$ and $t \geq s \geq 0$, 
$$\P_{x,s}(\tau_X > t) = \P^B_{(s+1)^\kappa x}\left[\tau^{(\cdot + s + 1)^\kappa}_B > t-s\right],$$
where, for any $s \geq 0$,
$$\tau^{(\cdot + s + 1)^\kappa}_B := \inf\{t \geq 0 : |B_t| = (t+s+1)^{\kappa}\}.$$
So, the Harnack inequality to show becomes: for any $t \geq 0$, 
$$\inf_{x \in [-a(s+1)^\kappa, a(s+1)^\kappa]} \P_x(\tau^{(\cdot + s + 1)^\kappa}_B > t) \geq C_{s,a} \sup_{x \in (-(s+1)^\kappa,(s+1)^\kappa)} \P_x(\tau^{(\cdot + s + 1)^\kappa}_B > t).$$
Actually, for any $t \geq 0$,
$$\inf_{x \in [-a(s+1)^\kappa, a(s+1)^\kappa]} \P_x(\tau^{(\cdot + s + 1)^\kappa}_B > t) = \P_{a(s+1)^\kappa}(\tau^{(\cdot + s + 1)^\kappa}_B > t),$$
and
$$\sup_{x \in (-(s+1)^\kappa,(s+1)^\kappa)} \P_x(\tau^{(\cdot + s + 1)^\kappa}_B > t) =  \P_0(\tau^{(\cdot + s + 1)^\kappa}_B > t).$$
Then, for any $t \geq 0$,
\begin{align*}
    \P_{a(s+1)^\kappa}(\tau^{(\cdot + s + 1)^\kappa}_B> t) &\geq \P_{a(s+1)^\kappa}\left(\tau^0_B <\tau^{(\cdot + s + 1)^\kappa}_B,\tau^{(\cdot + s + 1)^\kappa}_B > t + \tau^0_B\right) \\
&=  \E_{a(s+1)^\kappa}\left(\1_{\tau_B^0 <\tau^{(\cdot + s + 1)^\kappa}_B} \P_0(\tau^{(\cdot + s + v + 1)^\kappa}_B > t)|_{v = \tau^0_B}\right)\\
&\geq  \P_{a(s+1)^\kappa}\left(\tau^0_B <\tau^{(\cdot + s + 1)^\kappa}_B\right) \P_0\left(\tau^{(\cdot + s + 1)^\kappa}_B > t\right),
\end{align*}
where 
$$\tau_B^0 := \inf\{t \geq 0 : B_t = 0\}.$$
Then, setting $C_{s,a} :=  \P_{a(s+1)^\kappa}\left(\tau^0_B <\tau^{(\cdot + s + 1)^\kappa}_B\right)$, one has $C_{s,a} > 0$ for any $s \geq 0$ and 
  $$\inf_{x \in [-a,a]} \P_{x,s}(\tau_X > t) \geq C_{s,a} \sup_{x \in (-1,1)} \P_{x,s}(\tau_X > t),~~~~\forall t \geq 0.$$
\item This is straightforward using the Harnack inequality for a Brownian motion and using the change of time provided by the Dubin-Schwartz transformation \eqref{ds}.
\end{enumerate}
\end{proof}
Now let us state and prove Lemma \ref{apt}.
\begin{lemma}
\label{apt}
Let $a > 0$ . Then there exists a function $\chi_a : \R_+ \to \R_+$ such that, for any $s \leq t$, for any $\mu \in \cM_1((-1,1))$ and any $\nu \in \cM_1((-1,1))$ such that $\nu([-a,a]) > \frac{1}{2}$,
$$\left|\frac{\P_{\mu,s}(\tau_X > t)}{\P_{\nu,s}(\tau_X > t)} -\frac{\P^Y_{\mu,s}(\tau_Y > t)}{\P^Y_{\nu,s}(\tau_Y > t)}\right| \leq \chi_a(s),$$
with $\chi_a(s) \to 0$ when $s$ goes to infinity
\end{lemma}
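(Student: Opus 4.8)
The plan is to express the survival ratio for $Y$ in terms of the survival ratio for $X$ by means of the Girsanov change of measure already set up in the proof of Proposition~\ref{asymptotic-pseudotrajectory}, and then to control the resulting error using Lemma~\ref{harnack} together with the mass hypothesis $\nu([-a,a]) > \frac{1}{2}$.

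First I would specialise \eqref{rev} to $S = (-1,1)$ and $t = T$. Since $\{\tau_X > T\} \subseteq \{X_T \in (-1,1)\}$ and $\{\tau_Y > T\} \subseteq \{Y_T \in (-1,1)\}$, this yields, for every $\mu \in \cM_1((-1,1))$ and $s \le T$,
$$\P^Y_{\mu,s}(\tau_Y > T) = \left(\frac{s+1}{T+1}\right)^{\kappa/2} R_\mu(s,T)\, \P_{\mu,s}(\tau_X > T), \qquad R_\mu(s,T) := \frac{\E_{\mu,s}\!\left(\exp\left(\frac{1}{2}N'_{s,T}\right)\1_{\tau_X > T}\right)}{\P_{\mu,s}(\tau_X > T)},$$
and \eqref{cpasmal} gives $R_\mu(s,T), R_\nu(s,T) \in [1-\phi(s),\, 1+\phi(s)]$ with $\phi(s) \to 0$. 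Dividing the displayed identity for $\mu$ by the one for $\nu$, the factor $\left(\frac{s+1}{T+1}\right)^{\kappa/2}$ cancels, and
$$\frac{\P^Y_{\mu,s}(\tau_Y > T)}{\P^Y_{\nu,s}(\tau_Y > T)} - \frac{\P_{\mu,s}(\tau_X > T)}{\P_{\nu,s}(\tau_X > T)} = \left(\frac{R_\mu(s,T)}{R_\nu(s,T)} - 1\right)\frac{\P_{\mu,s}(\tau_X > T)}{\P_{\nu,s}(\tau_X > T)}.$$
For the first factor, as soon as $\phi(s) < 1$ one has $\left|\frac{R_\mu(s,T)}{R_\nu(s,T)} - 1\right| \le \frac{|R_\mu(s,T) - R_\nu(s,T)|}{R_\nu(s,T)} \le \frac{2\phi(s)}{1-\phi(s)} =: \phi'(s)$, which tends to $0$ and is independent of $T$, $\mu$, $\nu$.

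For the second factor I would invoke Lemma~\ref{harnack}~a) and the hypothesis on $\nu$: since $\nu([-a,a]) > \frac{1}{2}$,
$$\P_{\nu,s}(\tau_X > T) \ge \frac{1}{2}\inf_{x \in [-a,a]}\P_{x,s}(\tau_X > T) \ge \frac{1}{2}C_{s,a}\sup_{x \in (-1,1)}\P_{x,s}(\tau_X > T) \ge \frac{1}{2}C_{s,a}\,\P_{\mu,s}(\tau_X > T),$$
so that $\frac{\P_{\mu,s}(\tau_X > T)}{\P_{\nu,s}(\tau_X > T)} \le \frac{2}{C_{s,a}}$ uniformly in $T$ and in $\mu$. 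Here it is important that $C_{s,a}$ can be chosen bounded away from $0$ uniformly in $s$: the constant produced in the proof of Lemma~\ref{harnack}~a) is $C_{s,a} = \P^B_{a(s+1)^\kappa}(\tau^0_B < \tau^{(\cdot+s+1)^\kappa}_B)$, and comparing the curved boundary $t \mapsto (t+s+1)^\kappa$ with the flat boundary at height $(s+1)^\kappa$ — which $|B|$ must cross first — and rescaling gives $C_{s,a} \ge \P^B_a(\tau^0_B < \tau^{\{-1,1\}}_B) = 1-a > 0$, independent of $s$. Combining the two factors, for all $s$ with $\phi(s) < 1$ and all $s \le T$ the left-hand side of the lemma is at most $\frac{2\phi'(s)}{1-a}$; on the remaining bounded range of $s$ I would simply bound it crudely by $\frac{2}{C_{s,a}} + \frac{2}{C_a}$ (using both parts of Lemma~\ref{harnack} in the same way, together with the hypothesis on $\nu$). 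Defining $\chi_a(s)$ to be this piecewise upper bound then gives a function with $\chi_a(s) \to 0$ as $s \to \infty$, as required.

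The only genuinely delicate point is the one just flagged: one must make sure the Harnack constant $C_{s,a}$ does not deteriorate as $s \to \infty$, since otherwise the factor $\frac{2}{C_{s,a}}$ could overwhelm the decay of $\phi'(s)$. Everything else is routine manipulation of the Girsanov identity \eqref{rev} and of the already-established two-sided bound \eqref{cpasmal}.
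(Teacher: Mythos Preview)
Your argument is correct, but it factors the identity in the opposite direction from the paper. After writing $\P^Y_{\mu,s}(\tau_Y > T) = \left(\frac{s+1}{T+1}\right)^{\kappa/2} R_\mu(s,T)\,\P_{\mu,s}(\tau_X > T)$ and cancelling the power of $\frac{s+1}{T+1}$, the paper pulls out the ratio $\frac{\P^Y_{\mu,s}(\tau_Y > T)}{\P^Y_{\nu,s}(\tau_Y > T)}$ and bounds it by $\frac{2}{C_a}$ using part b) of Lemma~\ref{harnack}, whose constant is $s$-independent by construction (via the Dubins--Schwarz time change). You instead pull out $\frac{\P_{\mu,s}(\tau_X > T)}{\P_{\nu,s}(\tau_X > T)}$ and are forced to control $C_{s,a}$ uniformly in $s$. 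Your gambler's-ruin comparison --- replacing the expanding boundary $t\mapsto (t+s+1)^\kappa$ by the closer flat boundary at height $(s+1)^\kappa$ and rescaling --- does give $C_{s,a}\ge 1-a$, and the rest of your bookkeeping (the bound $|R_\mu/R_\nu-1|\le 2\phi(s)/(1-\phi(s))$ and the crude bound on the possibly empty range $\{\phi(s)\ge 1\}$) is fine. So both routes work; the paper's is shorter because it exploits exactly the $s$-uniformity already built into part b), whereas your route needs the extra flat-boundary comparison but, as a by-product, yields a sharpening of part a) of Lemma~\ref{harnack} that the paper never states.
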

\begin{proof}
Let $s \leq t$. Then, using \eqref{rev} applied to $S = (-1,1)$, for any $\mu \in \cM_1((-1,1))$,
\begin{equation}
    \label{Y-X}
    \P^Y_{\mu,s}(\tau_Y > t) = \left(\frac{s+1}{t+1}\right)^{\frac{\kappa}{2}} \E_{\mu,s}\left[\exp\left(\frac{1}{2} N'_{s,t}\right) \1_{\tau_X > t}\right],
\end{equation}
and by \eqref{cpasmal}, for any $\mu \in \cM_1((-1,1))$, 
$$1-\phi(s) \leq \frac{\E_{\mu,s}(\exp\left(\frac{1}{2} N'_{s,t}\right)\1_{\tau_X > t})}{\P_{\mu,s}(\tau_X > t)} \leq 1+\phi(s).$$
Thus, by \eqref{Y-X}, for any $\mu \in \cM_1((-1,1))$,
$$\left(\frac{s+1}{t+1}\right)^{\frac{\kappa}{2}}(1-\phi(s)) \leq \frac{\P^Y_{\mu,s}(\tau_Y > t)}{\P_{\mu,s}(\tau_X > t)} \leq \left(\frac{s+1}{t+1}\right)^{\frac{\kappa}{2}}(1+\phi(s)).$$
and, since $\phi(s) < 1$ for any $s \geq 0$, one has also,
\begin{equation}
    \label{encadrement2}
    \left(\frac{t+1}{s+1}\right)^{\frac{\kappa}{2}}\frac{1}{1+\phi(s)} \leq \frac{\P_{\mu,s}(\tau_X > t)}{\P^Y_{\mu,s}(\tau_Y > t)} \leq \left(\frac{t+1}{s+1}\right)^{\frac{\kappa}{2}}\frac{1}{1-\phi(s)}.
\end{equation}
Thus, for any $\mu, \nu \in \cM_1((-1,1))$,
\begin{equation}
    \label{encadrement}
\frac{1-\phi(s)}{1+\phi(s)} \leq \frac{\P_{\mu,s}(\tau_X > t)}{\P^Y_{\mu,s}(\tau_Y > t)}\frac{\P^Y_{\nu,s}(\tau_Y > t)}{\P_{\nu,s}(\tau_X > t)} \leq \frac{1+\phi(s)}{1-\phi(s)}.
\end{equation}
Thus, it is deduced from \eqref{encadrement} that, for any $\mu, \nu \in \cM_1((-1,1))$,
\begin{align*}
&\left|\frac{\P_{\mu,s}(\tau_X > t)}{\P_{\nu,s}(\tau_X > t)} -\frac{\P^Y_{\mu,s}(\tau_Y > t)}{\P^Y_{\nu,s}(\tau_Y > t)}\right|\\ &~~~~~~~~~~~~~~~~\leq \frac{\P^Y_{\mu,s}(\tau_Y > t)}{\P^Y_{\nu,s}(\tau_Y > t)} \left|\frac{\P_{\mu,s}(\tau_X > t)}{\P_{\nu,s}(\tau_X > t)}\frac{\P^Y_{\nu,s}(\tau_Y > t)}{\P^Y_{\mu,s}(\tau_Y > t)} -1\right| \\
&~~~~~~~~~~~~~~~~\leq  \frac{\P^Y_{\mu,s}(\tau_Y > t)}{\P^Y_{\nu,s}(\tau_Y > t)} \left[ \left(\frac{1+\phi(s)}{1-\phi(s)} -1\right) \lor \left(1-\frac{1-\phi(s)}{1+\phi(s)}\right)\right].
\end{align*}
Now, if $\nu$ is such that $\nu([-a,a]) > \frac{1}{2}$, by Lemma \ref{harnack}, for any $s \leq t$,
\begin{align*}
    \P_{\mu,s}^Y(\tau_Y > t) &\leq \sup_{x \in (-1,1)} \P_{x,s}^Y(\tau_Y > t) \\
    &\leq \frac{1}{C_a} \inf_{x \in [-a,a]} \P_{x,s}^Y(\tau_Y > t) \\
    &\leq  \frac{1}{C_a \nu([-a,a])} \P_{\nu,s}^Y(\tau_Y > t) \\
    &\leq \frac{2}{C_a}  \P_{\nu,s}^Y(\tau_Y > t).
\end{align*}
As a result, 
$$\left|\frac{\P_{\mu,s}(\tau_X > t)}{\P_{\nu,s}(\tau_X > t)} -\frac{\P^Y_{\mu,s}(\tau_Y > t)}{\P^Y_{\nu,s}(\tau_Y > t)}\right| \leq \frac{2}{C_a}  \left[ \left(\frac{1+\phi(s)}{1-\phi(s)} -1\right) \lor \left(1-\frac{1-\phi(s)}{1+\phi(s)}\right)\right].$$
It remains to set $\chi_a(t) :=  \frac{2}{C_a} \left(\frac{1+\phi(s)}{1-\phi(s)} -1\right) \lor \left(1-\frac{1-\phi(t)}{1+\phi(t)}\right)$. Then, since $\phi(s) \to 0$ when $s \to \infty$, $\chi_a$ goes also to $0$ when $s$ goes to infinity. 
\end{proof}

\begin{remark}
The inequalities \eqref{encadrement2} allows us to get that, for any $\mu \in \cM_1((-1,1))$ and $s \geq 0$,
$$\P_{\mu,s}(\tau_X > t) =_{t \to \infty} O\left(\left(\frac{t+1}{s+1}\right)^\frac{\kappa}{2} e^{-\lambda_{Bm} \frac{(t+1)^{1-2\kappa}-(s+1)^{1-2\kappa}}{1-2\kappa}}\right).$$
Hence, defining
$$\tau_{B}^{(\cdot+1)^\kappa} := \inf\{t \geq 0 : |B_t| = (t+1)^{\kappa}\},$$
then, for $\kappa < \frac{1}{2}$, for any $\mu \in \cM_1((-1,1))$,
$$\P^{B}_{\mu}(\tau_B^{(\cdot+1)^\kappa} > t) =_{t \to \infty} O\left((t+1)^\frac{\kappa}{2} e^{-\lambda_{Bm} \frac{(t+1)^{1-2\kappa}-1}{1-2\kappa}}\right).$$
This observation completes the Theorem 5 obtained by Novikov in \cite{novikov81}, which states in our case the following asymptotic order : 
$$\log \P_\mu^B(\tau_B^{(\cdot + 1)^\kappa} > t) \sim_{t \to \infty} - \lambda_{Bm} \frac{(t+1)^{1-2\kappa}-1}{1-2\kappa}.$$
\end{remark}

Now we can prove Proposition \ref{mean-ratio}.

\begin{proof}[Proof of Proposition \ref{mean-ratio}]
Let $\nu \in \cM_1((-1,1))$ and $s \geq 0$. We recall then the notation 
$$\nu_{(s,t)} := \P_{\nu,s}(X_t \in \cdot | \tau_X > t),~~~~\forall s \leq t.$$
By Theorem \ref{thm}, the family $(\nu_{(s,t)})_{s \leq t}$ converges weakly when $t$ goes to infinity towards $\alpha_{Bm}$. Thus, by Prokhorov's theorem, $(\nu_{(s,t)})_{s \leq t}$ is tight. This implies that there exists $a_s(\nu) \in (0,1)$ such that, for any $t \geq s$, $\nu_{(s,t)}([-a_s(\nu),a_s(\nu)]) > \frac{1}{2}$. \\ 
Let $\mu \in \cM_1((-1,1))$, $s \leq t$ and $T \geq 0$. Then, by the Markov property,
$$\frac{\P_{\mu,s}(\tau_X > t+T)}{\P_{\nu,s}(\tau_X > t+T)} - \frac{\P_{\mu,s}(\tau_X > t)}{\P_{\nu,s}(\tau_X > t)} = \frac{\P_{\mu,s}(\tau_X > t)}{\P_{\nu,s}(\tau_X > t)}\left(\frac{\P_{\mu_{(s,t)},t}(\tau_X > t+T)}{\P_{\nu_{(s,t)},t}(\tau_X > t+T)} - 1\right).$$
Using the same argument as in the proof of Lemma \ref{apt}, by Lemma \ref{harnack}, one has
$$\frac{\P_{\mu,s}(\tau_X > t)}{\P_{\nu,s}(\tau_X > t)} \leq \frac{2}{C_{s,a_s(\nu)}}.$$
Thus, 
$$\left|\frac{\P_{\mu,s}(\tau_X > t+T)}{\P_{\nu,s}(\tau_X > t+T)} - \frac{\P_{\mu,s}(\tau_X > t)}{\P_{\nu,s}(\tau_X > t)}\right| \leq \frac{2}{C_{s,a_s(\nu)}} \left|\frac{\P_{\mu_{(s,t)},t}(\tau_X > t+T)}{\P_{\nu_{(s,t)},t}(\tau_X > t+T)} - 1\right|.$$
Using Lemma \ref{apt}, one has
$$\left|\frac{\P_{\mu,s}(\tau_X > t+T)}{\P_{\nu,s}(\tau_X > t+T)} - \frac{\P_{\mu,s}(\tau_X > t)}{\P_{\nu,s}(\tau_X > t)}\right| \leq \frac{2}{C_{s,a_s(\nu)}}\left( \chi_{a_s(\nu)}(t) + \left|\frac{\P^Y_{\mu_{(s,t)},t}(\tau_Y > t+T)}{\P^Y_{\nu_{(s,t)},t}(\tau_Y > t+T)} - 1\right|\right).$$
Now,
\begin{align*}
 \left|\frac{\P^Y_{\mu_{(s,t)},t}(\tau_Y > t+T)}{\P^Y_{\nu_{(s,t)},t}(\tau_Y > t+T)} - 1\right| &=  \frac{|\P^Y_{\mu_{(s,t)},t}(\tau_Y > t+T) - \P^Y_{\nu_{(s,t)},t}(\tau_Y > t+T)|}{\P^Y_{\nu_{(s,t)},t}(\tau_Y > t+T)} \\
&\leq \frac{\sup_{x \in (-1,1)}\P^Y_{x,t}(\tau_Y > t+T)}{\P^Y_{\nu_{(s,t)},t}(\tau_Y > t+T)} ||\mu_{(s,t)} - \nu_{(s,t)}||_{TV} \\
&\leq \frac{4}{C_{a_s(\nu)}} \times \left(F\left(\frac{t}{2}\right) + C_{Bm} \exp\left( - \gamma_{Bm} \frac{(t+1)^{1-2\kappa} - (\frac{t}{2} + 1)^{1-2\kappa}}{1-2\kappa}\right)\right),
\end{align*}
where we used Lemma \ref{harnack} and \eqref{inequality}. \\
We conclude from all these computations that $t \to \frac{\P_{\mu,s}(\tau_X > t)}{\P_{\nu,s}(\tau_X > t)}$ is a Cauchy sequence, hence converges as $t \to \infty$. Denote by $h(s,\mu,\nu)$ the limit and set $$H(\nu,s,t) := \frac{2}{C_{s,a_s(\nu)}}\left[ \chi_{a_s(\nu)}(t) + \frac{4}{C_{a_s(\nu)}} \times \left(F\left(\frac{t}{2}\right) + C_{Bm} \exp\left( - \gamma_{Bm} \frac{(t+1)^{1-2\kappa} - (\frac{t}{2} + 1)^{1-2\kappa}}{1-2\kappa}\right)\right)\right].$$ One has therefore, for any $\mu, \nu \in \cM_1((-1,1))$, 
$$\left|\frac{\P_{\mu,s}(\tau_X > t)}{\P_{\nu,s}(\tau_X > t)} - h(s,\mu,\nu) \right| \leq H(\nu,s,t),$$
and $\lim_{t \to \infty} H(\nu,s,t) = 0$. \\
In order to complete the proof, The final steps of the proof are inspired by the proof of Proposition 3.1 in \cite{CV2016}. We define, for any $s \geq 0$,
$$\eta_s : x \to h(s,\delta_x, \delta_{0}).$$
Then, by Lebesgue's dominated convergence theorem, $$\lim_{t \to \infty} \frac{\P_{\mu,s}(\tau_X > t)}{\P_{0,s}(\tau_X > t)} = \mu(\eta_s).$$  Then, for any $\mu, \nu \in \cM_1((-1,1))$,
$$h(s,\mu,\nu) = \lim_{t \to \infty} \frac{\P_{\mu,s}(\tau_X > t)}{\P_{\nu,s}(\tau_X > t)} =  \lim_{t \to \infty} \frac{\P_{\mu,s}(\tau_X > t)/\P_{0,s}(\tau_X > t)}{\P_{\nu,s}(\tau_X > t)/\P_{0,s}(\tau_X > t)}= \frac{\mu(\eta_s)}{\nu(\eta_s)}.$$
Moreover, for any $s \leq t \leq u$,
\begin{align*}
\E_{x,s}\left(\1_{\tau_X > t} \frac{\P_{X_t,t}(\tau_X > u)}{\P_{0,t}(\tau_X > u)} \right) &= \frac{\P_{x,s}(\tau_X > u)}{\P_{0,t}(\tau_X > u)}\\
&=  \frac{\P_{x,s}(\tau_X > u)}{\P_{0,s}(\tau_X > u)} \E_{0,s}\left(\1_{\tau_X > t} \frac{\P_{X_t,t}(\tau_X > u)}{\P_{0,t}(\tau_X > u)} \right).
\end{align*}
For any $\mu \in \cM_1((-1,1))$, integrating both sides of the equation with respect to $\mu$, letting $u \to \infty$ and using Lebesgue's theorem, we deduce that, for any $s \leq t$, there exists a positive constant $c_{s,t}$ which does not depend on $\mu$ such that
$$c_{s,t} = \frac{\E_{\mu,s}(\1_{\tau_X > t} \eta_t(X_t))}{\mu(\eta_s)}.$$
In addition, for any $s \leq t \leq u$ and for any $\mu,\nu \in \cM_1((-1,1))$,
$$c_{s,t} c_{t,u} =  \frac{\E_{\mu,s}(\1_{\tau_X > t} \eta_t(X_t))}{\mu(\eta_s)}  \frac{\E_{\nu,t}(\1_{\tau_X > u} \eta_u(X_u))}{\nu(\eta_t)}.$$
Choosing $\nu = \mu_{(s,t)} = \P_{\mu,s}(X_t \in \cdot | \tau_X > t)$ and using the Markov property, we obtain
\begin{align*}
    c_{s,t}c_{t,u} &= \frac{\E_{\mu,s}(\1_{\tau_X > t} \eta_t(X_t))}{\mu(\eta_s)} \times \frac{\E_{\mu_{(s,t)},t}(\1_{\tau_X > u}\eta_u(X_u))}{\mu_{(s,t)}(\eta_t)}\\
    &= \frac{\E_{\mu,s}(\1_{\tau_X > t} \eta_t(X_t)) \times \E_{\mu,s}(\E_{X_t,t}(\1_{\tau_X > u} \eta_u(X_u))|\tau_X > t)}{\mu(\eta_s) \times \E_{\mu,s}(\eta_t(X_t) | \tau_X > t)}\\
    &= \frac{\E_{\mu,s}(\1_{\tau_X > t} \eta_t(X_t)) \times \E_{\mu,s}(\E_{X_t,t}(\1_{\tau_X > u} \eta_u(X_u))\1_{\tau_X > t})}{\mu(\eta_s) \times \E_{\mu,s}(\eta_t(X_t)  \1_{\tau_X > t})}\\
    &= \frac{\E_{\mu,s}(\1_{\tau_X > u} \eta_u(X_u))}{\mu(\eta_s)}\\& = c_{s,u}.
\end{align*} 
Because of the last equality, replacing for all $s \geq 0$ the function $\eta_s(x)$ by $\eta_s(x)/c_{0,s}$ entails \eqref{eigenfunction}. 
\end{proof}   
\textbf{Acknowledgement.} I would like to thank my Ph.D advisor Patrick Cattiaux for suggesting me to work on this interesting topic and for the attention he gave to this paper.

\bibliographystyle{abbrv}
\bibliography{biblio-william}

\end{document}